\documentclass[11pt]{amsart}
\usepackage[margin=2.5cm]{geometry}

\usepackage{amsmath,amsthm,amsfonts,latexsym,amscd,amssymb,enumerate,color,hyperref,mathtools,bbm}
\input{xypic}

\usepackage{lineno}

\usepackage{rotating}
\usepackage{multirow}
\usepackage{graphicx}
\usepackage{epstopdf}
\usepackage[usenames,dvipsnames]{xcolor}
\usepackage{tikz}
\usepackage{inputenc}
\usepackage{tikz-cd}

\usepackage{booktabs}
\usepackage{enumitem}
\usepackage{multicol}

\usepackage{etoolbox}
\newcounter{dummy}
\makeatletter  
\newcommand\myitem[1][]{\item[#1]\refstepcounter{dummy}\def\@currentlabel{#1}}   
\makeatother

\usepackage{hyperref}
\hypersetup{colorlinks,allcolors=blue}

\def \<{\langle}
\def \>{\rangle}

\newcommand{\Z}{\mathbb{Z}}
\newcommand{\R}{\mathbb{R}}
\newcommand{\NN}{\mathbb{N}}


\newcommand{\DD}{\mathcal{D}}
\newcommand{\Di}[1]{\DD_\infty(#1)}

\newcommand{\vn}{\varnothing}

\newcommand{\mset}[1]{\left\{\!\left\{ #1 \right\}\!\right\}}

\newtheorem{thm}{Theorem}[section]

\newtheorem{lem}[thm]{Lemma}
\newtheorem{prop}[thm]{Proposition}

\newtheorem{theorem}{Theorem}

\newtheorem{corollary}[theorem]{Corollary}

\theoremstyle{definition}
\newtheorem{defn}[thm]{Definition}

\newtheorem{example}[thm]{Example}

\newtheorem*{ack}{Acknowledgements}

\numberwithin{equation}{section}
\numberwithin{table}{section}

\makeatletter
\def\@setthanks{\vspace{-\baselineskip}\def\thanks##1{\@par##1}\thankses}
\makeatother

\begin{document}

\author[M.~Che]{Mauricio Che $^\mathrm{A}$}
\address[Che]{Department of Mathematical Sciences, Durham University, United Kingdom.}
\email{mauricio.a.che-moguel@durham.ac.uk}

\thanks{$^{\mathrm{A}}$Supported by CONACYT Doctoral Scholarship No. 769708.}

\author[F.~Galaz-Garc\'ia]{Fernando Galaz-Garc\'ia $^{\mathrm{B}}$}
\address[Galaz-Garc\'ia]{Department of Mathematical Sciences, Durham University, United Kingdom.}
\email{fernando.galaz-garcia@durham.ac.uk}

\author[L.~Guijarro]{Luis Guijarro $^{\mathrm{B}}$}
\address[Guijarro]{Department of Mathematics, Universidad Aut\'onoma de Madrid and ICMAT CSIC-UAM-UC3M, Spain}
\email{luis.guijarro@uam.es} 

\thanks{$^{\mathrm{B}}$Supported by research grants  MTM2017-‐85934-‐C3-‐2-‐P and PID2021-124195NB-C32 from the  Ministerio de Econom\'{\i}a y Competitividad de Espa\~{na} (MINECO), and by ICMAT Severo Ochoa project CEX2019-000904-S(MINECO)}

\author[I.~Membrillo Solis]{Ingrid Membrillo Solis $^\mathrm{C}$}
\address[Membrillo Solis]{Mathematical Sciences, University of Southampton, United Kingdom}
\email{i.membrillo-solis@soton.ac.uk} 
\curraddr{School of Computer Science and Engineering, University of Westminster, United Kingdom}
\email{i.membrillosolis@westminster.ac.uk}

\thanks{$^\mathrm{C}$Supported by the Leverhulme Trust (grant RPG-2019-055).}

\author[M.~Valiunas]{Motiejus Valiunas}
\address[Valiunas]{Mathematical Institute, University of Wroc{\l}aw, Poland}
\email{motiejus.valiunas@math.uni.wroc.pl} 

\title[Geometry of the Bottleneck Distance]{Basic Metric Geometry of the Bottleneck Distance}
\date{\today}

\begin{abstract}
Given a metric pair $(X,A)$, i.e.\ a metric space $X$ and a distinguished closed set $A\subset X$, one may construct in a functorial way a pointed pseudometric space $\DD_\infty(X,A)$ of persistence diagrams equipped with the bottleneck distance.
We investigate the basic metric properties of the spaces $\DD_\infty(X,A)$ and obtain characterizations of their metrizability, completeness, separability, and geodesicity. 
\end{abstract}

\subjclass[2010]{53C23, 55N31, 54F45}

\keywords{bottleneck distance, metric pair, persistence diagram}
\maketitle

\setcounter{tocdepth}{1}


\section{Introduction}
\label{S:INTRO}
In recent years, persistent homology has received considerable attention as it has shown not only to be a powerful tool in data analysis but to provide new perspectives in the study of abstract metric spaces. In persistent homology, the small, medium and large-scale topological features of geometric objects are encoded in a \textit{persistence diagram}, a multiset of points in the (extended) plane. 
The construction of such diagrams starts with a filtration of a topological space, and records the exact birth and death times of  generators of the persistent homology associated to that particular filtration \cite{ZC05}. Most common filtrations come from the Vietoris--Rips complexes over a given metric space or from sublevel sets of natural functions defined on some topological space. In either case, the obtained persistence diagram contains topological information regarding different levels or scales of the underlying problem of interest \cite{CVJ22}. In applications to data analysis, it is crucial to compare the topological features of the analyzed data quantitatively, which in turn might be filtrated or sampled using different methods. Therefore, most applications of persistent homology require not only being able to compute persistence diagrams, but a measure of similarity between such objects.

The bottleneck distance was the first measure of similarity that appeared in the literature to compare persistence diagrams \cite{C-SEH07}. It is well-known that the bottleneck distance is an (extended) pseudometric \cite{oudot2017}. The successful introduction of the bottleneck distance gave rise to  alternative similarity measures for persistence diagrams. For instance, the $p$-Wasserstein metrics, for $1\leq p < \infty$, form  
a one-parameter family of similarity measures between persistence diagrams, which, in contrast to the bottleneck distance, are metrics   
\cite{CEHM10}; in the limit, one may think of the bottleneck distance as another member of this family.
Out of all the similarity measures that have been defined in persistent homology, only the bottleneck distance has been shown to be equivalent to the interleaving distance, a pseudometric defined for persistent modules, the algebraic analogues of  persistence diagrams \cite{oudot2017}.

Several authors have studied the topological and geometric properties of the spaces defined by the bottleneck distance,  $\DD_\infty(\mathbb R^2_{\geq0},\Delta)$, and the $p$-Wasserstein metrics, $\DD_p(\mathbb R^2_{\geq0},\Delta)$, where $\R^{2}_{\geq 0} = \{(x,y)\in\R^{2} : 0\leq x\leq y\}$, $\Delta= \{(x,y)\in\R^{2}_{\geq 0} : x=y\}$, and $1 \leq p <\infty$,  as well as those of generalizations of these spaces \cite{bubenik2,bubenik1,CGGGMS,divol21}. In \cite{BH,CGGGMS}, the authors study the geometry and topology of spaces of generalized persistence diagrams $\DD_p{(X,A)}$, $1\leq p<\infty$. These spaces are a generalization of the spaces defined in \cite[Definition 4]{mileyko1}. When   $(X,A)=(\mathbb \R^2_{\geq0},\Delta)$, one recovers the usual space of persistence diagrams, where the coordinates of a  point $(x,y)\in \R^2_{\geq0}$ are interpreted as the birth at scale $x$ and death at scale $y$ of a topological feature of the space under consideration. The  spaces of  persistence diagrams are constructed in a functorial way: for each $p\in [1,\infty]$, there exists a functor $\DD_p$ which assigns to a metric pair $(X,A)$, i.e.\ a metric space $X$ and a distinguished non-empty closed subset $A\subset X$, a space of (generalized) persistence diagrams $\DD_p(X,A)$ equipped with either the bottleneck distance, when $p=\infty$, or the $p$-Wasserstein metric, when $1\leq p<\infty$. The  topology and geometry of the spaces $\DD_p(\mathbb R^2_{\geq0},\Delta)$, $1\leq p<\infty$,  were analyzed in \cite{mileyko1,turner14}. We note, however, that in previous works, the geometric and topological properties of spaces of persistence diagrams equipped with the bottleneck distance had not been completely investigated, since their study requires the use of distinct methods from those used in the study of the $p$-Wasserstein spaces of persistence diagrams. 
A further motivation for the study of the spaces $\DD_p(X,A)$ is that, when $X=\R^{2n}_{\geq 0} = \{(x_1,y_1,\dots,x_n,y_n):0\leq x_i\leq y_i\ \text{for}\ i=1,\dots,n\}$ and $A=\Delta_n = \{(x_1,y_1,\dots,x_n,y_n)\in \R^{2n}_{\geq 0}: x_i=y_i\ \text{for}\ i=1,\dots, n\}$ for $n\geq 2$, one may consider the resulting spaces of generalized persistence diagrams as the parameter spaces for rectangle-decomposable persistent modules. These objects arise in multiparameter persistent homology and have received considerable attention in recent years (see, for example, \cite{asa2022,bak2021,botnan2022,SC17}).

The present work aims to fill some gaps in our basic understanding of the topological and geometric properties of the spaces of persistence diagrams equipped with the bottleneck distance. First of all, we note that, given a metric pair $(X,A)$, the space $\Di{X,A}$ is not necessarily a metric space: the (generalized) bottleneck distance function $d_\infty$ might be a pseudometric that is not a metric. This appears to be the case for most common examples of spaces of (generalized) persistence diagrams equipped with the bottleneck distance, including the usual space of euclidean persistence diagrams. 


\begin{theorem}[Metrizability] \label{thm:Di-metric}
The pseudometric space $\Di{X,A}$ is a metric space if and only if $X \setminus A$ with the restricted metric of $X$ is a discrete space.
\end{theorem}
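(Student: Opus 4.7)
The plan is to prove the two implications separately.

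For the direction where $X\setminus A$ is discrete, I would show that $d_\infty(\alpha,\beta) = 0$ forces $\alpha = \beta$. Given $x\in X\setminus A$ of multiplicity $k\ge 1$ in $\alpha$, the discreteness of $X\setminus A$ together with the closedness of $A$ yields $\varepsilon > 0$ such that $B_\varepsilon(x)\cap(X\setminus A) = \{x\}$ and $\varepsilon < d(x,A)$. Since $d_\infty(\alpha,\beta) = 0$, for every $\delta < \varepsilon$ there is a partial matching of cost less than $\delta$. Each of the $k$ copies of $x$ in $\alpha$ cannot be unmatched (that would contribute cost $d(x,A) > \delta$) and its partner in $\beta$ must lie within $\delta < \varepsilon$ of $x$ in $X\setminus A$, hence equal $x$ itself. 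Injectivity of the matching then forces $\beta$ to contain at least $k$ copies of $x$, and the symmetric argument gives equality of multiplicities. Since $x$ was arbitrary, $\alpha = \beta$.

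For the converse, I would exploit a non-isolated point $x \in X\setminus A$ by picking a sequence $(z_n)$ in $(X\setminus A)\setminus\{x\}$ with $d(x,z_n) < 1/n$. The candidate distinct diagrams are
\[
\alpha = \mset{x, z_1, z_2, \dots}, \qquad \beta = \mset{z_1, z_2, \dots},
\]
which differ at the point $x$. For each $N\ge 1$, I would build a bijection $\phi_N\colon \alpha \to \beta$ by setting $\phi_N(x) = z_N$, $\phi_N(z_k) = z_k$ for $k < N$, and $\phi_N(z_k) = z_{k+1}$ for $k\ge N$. The matched pairs contribute either cost $0$ (for $k < N$), cost $d(x,z_N) < 1/N$ (for $x$), or cost $d(z_k,z_{k+1}) \le d(z_k,x) + d(x,z_{k+1}) < 2/k \le 2/N$ (for $k \ge N$). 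Hence $\phi_N$ has cost at most $2/N$, and $d_\infty(\alpha,\beta) = 0$ while $\alpha \neq \beta$.

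The main obstacle is the converse direction: designing a matching of $\alpha$ and $\beta$ whose cost is uniformly small despite the presence of the ``extra'' point $x$. The key trick is the sliding bijection $\phi_N$, which absorbs $x$ far into the sequence and propagates the disturbance forward via a one-step shift, so that the total cost is controlled by the near-Cauchy behaviour of $(z_n)$. A secondary check, essentially bookkeeping, is that $\alpha$ and $\beta$ qualify as elements of $\Di{X,A}$ under the paper's definition of generalised persistence diagrams; modulo that verification, both implications reduce to the two direct arguments above.
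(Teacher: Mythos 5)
Your proposal is correct and follows essentially the same route as the paper: the forward direction isolates a point of $X\setminus A$ with an $\varepsilon$-ball meeting $X\setminus A$ only in that point (the paper argues the contrapositive via a point of differing multiplicity, but the mechanism is identical), and the converse uses the same two diagrams differing by one non-isolated point together with the same shift bijection of cost $O(1/N)$. The bookkeeping you defer (that $\alpha,\beta\in\Di{X,A}$, i.e.\ that $\sup_n d(z_n,A)<\infty$) is immediate since $z_n\to x$, exactly as the paper notes.
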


 If $X$ is complete, then $\DD_p(X,A)$ is a complete metric space provided $1\leq p<\infty$ (see \cite[Theorem B (1)]{CGGGMS}). Here, we obtain the following characterization of completeness for $\DD_p(X,A)$ for all $p\in [1,\infty]$.  


\begin{theorem}[Completeness] \label{thm:Di-complete}
For any $p\in [1,\infty]$, the space $\DD_p(X,A)$ is complete if and only if $X/A$ is complete.
\end{theorem}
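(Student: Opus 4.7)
The plan is to reduce to the case where $A$ is a singleton via the functorial isometric identification $\DD_p(X,A) \cong \DD_p(X/A, *)$ induced by the quotient map $(X,A) \to (X/A,*)$ of metric pairs, valid for every $p \in [1,\infty]$. This identification holds because both spaces assign the same cost $d(x,A) = d_{X/A}([x],*)$ to matching a point $x \in X \setminus A$ to the distinguished set (respectively, basepoint), and an optimal matching between points of $X \setminus A$ can always be chosen so as not to route through the basepoint, so the matching costs coincide. After this reduction it suffices to work with the pointed metric space $(X/A, *)$.

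For the $(\Rightarrow)$ direction, I would embed $X/A$ as a closed bi-Lipschitz subspace of $\DD_p(X,A)$ via the singleton map $\iota \colon [x] \mapsto \{x\}$, with $\iota(*) = \varnothing$. The identity $d_\infty(\{x\},\{y\}) = \min(d(x,y), \max(d(x,A), d(y,A)))$ (and its $p < \infty$ analog) shows $\iota$ is bi-Lipschitz with respect to the quotient metric, within a factor of $2$. Closedness of the image follows because any diagram $\sigma$ with at least two points in $X \setminus A$ satisfies $d_p(\{x\}, \sigma) \geq \min_{y \in \sigma \setminus A} d(y,A) > 0$ uniformly in $x$, since any partial matching must send at least one point of $\sigma$ to $A$. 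Completeness of $\DD_p(X,A)$ therefore passes to the closed subspace $\iota(X/A)$, and hence to $X/A$.

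For the $(\Leftarrow)$ direction with $p \in [1, \infty)$, the reduction together with Theorem~B(1) of \cite{CGGGMS}, applied to the complete metric pair $(X/A, *)$, immediately gives completeness of $\DD_p(X,A)$. For $p = \infty$, I would argue directly: given a Cauchy sequence $\{\sigma_n\}$ in $\DD_\infty(X,A)$, pass to a subsequence with $d_\infty(\sigma_n, \sigma_{n+1}) < 2^{-n}$, fix $2^{-n}$-optimal partial matchings $\mu_n$ between $\sigma_n$ and $\sigma_{n+1}$, and trace each point of each $\sigma_n$ forward under these matchings. A trajectory is either matched to $A$ at some finite stage, in which case it is declared dead, or it persists indefinitely with geometrically decaying consecutive distances, and hence converges to a point of $X/A$ by completeness. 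Define $\sigma_\infty$ as the multiset of limits of surviving trajectories, with multiplicities inherited from the $\sigma_n$; convergence $\sigma_n \to \sigma_\infty$ in $d_\infty$ is then witnessed by the matchings assembled from the $\mu_n$.

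The principal obstacle lies in this last step: one must verify that $\sigma_\infty$ is a legitimate element of $\DD_\infty(X,A)$, i.e.\ that for every $\varepsilon > 0$ only finitely many of its points lie outside the $\varepsilon$-neighborhood of $A$. This should follow from Cauchyness combined with the $\ell^\infty$ structure of the bottleneck distance: for $n$ large enough, any $\varepsilon$-far point of $\sigma_\infty$ corresponds to a trajectory through points of $\sigma_n$ at distance at least $\varepsilon/2$ from $A$, whose number is finite by the tameness of $\sigma_n$ and controlled uniformly in $n$ by the bottleneck bound $d_\infty(\sigma_n, \sigma_{n+1}) < 2^{-n}$. Care must also be taken to handle the non-uniqueness of optimal matchings and the possibility of trajectories merging or splitting across matchings, issues absent in the Wasserstein case where the summed cost structure simplifies the bookkeeping.
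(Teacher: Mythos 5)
Your proposal follows essentially the same route as the paper: the reduction to the pointed case via $\DD_p(X,A)\cong\DD_p(X/A,\{A\})$, the use of singleton diagrams for the forward implication, the citation of \cite[Theorem B (1)]{CGGGMS} for $p<\infty$, and, for $p=\infty$, a construction of the limit diagram by composing near-optimal bijections along a rapidly Cauchy subsequence and following each point's trajectory---this is precisely the paper's lift to $\ell_\infty(X,a_0)$ (Lemma~\ref{lem:X.complete.then.Dinfty.complete}) in different notation. The only packaging difference is in the forward direction, where you phrase the argument as completeness passing to the closed, bi-Lipschitzly embedded copy $\iota(X/A)$ of singleton diagrams, while the paper extracts the limit point directly from the limit of the Cauchy sequence $\mset{x_n}$ of diagrams; both work, though your closedness bound should be the \emph{second-largest} value of $d(y,A)$ over $y\in\sigma$ rather than $\min_{y\in\sigma\setminus A}d(y,A)$, which can vanish for diagrams accumulating on $A$.

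The one genuine error is in what you identify as the ``principal obstacle''. Membership in $\Di{X,A}$ does \emph{not} require that only finitely many points lie outside each $\varepsilon$-neighborhood of $A$; by Definition~\ref{def:spaceofpd} it requires only $d_\infty(\sigma,\sigma_\varnothing)=\sup_{x\in\sigma}d(x,A)<\infty$. The finiteness condition you describe defines the smaller space $\overline{D}_\infty(X,A)$ of \cite{BH}, which the introduction explicitly distinguishes from $\Di{X,A}$. Consequently your appeal to ``the tameness of $\sigma_n$'' is unavailable (a general $\sigma_n\in\Di{X,A}$ is not tame), and the tameness of $\sigma_\infty$ is false in general (take $\sigma_n$ constant equal to a non-tame diagram). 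Fortunately this makes your remaining step easier, not harder: $\sup_x d(x,A)<\infty$ for $\sigma_\infty$ follows at once because each surviving trajectory drifts a total distance at most $\sum_k 2^{1-k}<\infty$ from its starting point in $\sigma_{N_0}$, whose points already lie in a bounded neighborhood of $A$. With that correction, and with the trajectories made unambiguous by fixing one genuine bijection between representatives at each step (adding points of $A$ with infinite multiplicity so that such bijections exist, as the paper does), your argument goes through.
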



\begin{corollary} \label{cor:Di-complete}
For any $p\in [1,\infty]$, if $X$ is complete, then so is $\DD_p(X,A)$.
\end{corollary}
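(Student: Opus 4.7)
The plan is to reduce everything to Theorem \ref{thm:Di-complete} and then establish the following purely metric fact: if $X$ is complete, then the quotient pointed metric space $X/A$ (with basepoint $[A]$ and the usual quotient metric $d([x],[y]) = \min\{d_X(x,y),\, d_X(x,A)+d_X(y,A)\}$, and $d([A],[x]) = d_X(x,A)$) is complete. Combining these two yields the corollary.

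To prove that $X/A$ is complete, I would take a Cauchy sequence $([x_n])$ in $X/A$ and split into two cases based on the behaviour of the numbers $a_n := d_X(x_n,A) = d([x_n],[A])$.

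In the first case, some subsequence of $(a_n)$ converges to $0$. Since $([x_n])$ is Cauchy in $X/A$ and $d([x_n],[A]) \leq d([x_n],[x_{n_k}]) + a_{n_k}$, this forces $a_n \to 0$, hence $[x_n] \to [A]$. In the second case, there is some $\delta > 0$ and an index $N$ with $a_n \geq \delta$ for all $n \geq N$. Then for $n,m \geq N$ one has $d_X(x_n,A)+d_X(x_m,A) \geq 2\delta$, so whenever $d([x_n],[x_m]) < \delta$ the minimum in the definition of the quotient metric must be realised by $d_X(x_n,x_m)$; thus $d_X(x_n,x_m) = d([x_n],[x_m])$ for all large $n,m$. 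It follows that $(x_n)_{n \geq N}$ is Cauchy in $X$, so by completeness of $X$ it converges to some $x \in X$. The quotient map being $1$-Lipschitz then gives $[x_n] \to [x]$ in $X/A$. Either way $([x_n])$ converges, so $X/A$ is complete.

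This is essentially routine; the only point that requires attention is choosing representatives so that the ambient Cauchy condition holds, which is precisely why the dichotomy ``$a_n \to 0$'' versus ``$a_n$ bounded away from $0$ along a tail'' is needed. With $X/A$ shown to be complete, Theorem \ref{thm:Di-complete} immediately yields that $\DD_p(X,A)$ is complete for every $p \in [1,\infty]$.
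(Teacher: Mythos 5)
Your argument is correct and follows essentially the same route as the paper: reduce via Theorem~\ref{thm:Di-complete} to showing that $X$ complete implies $X/A$ complete, then split a Cauchy sequence in $X/A$ according to whether $d(x_n,A)\to 0$ or stays bounded away from $0$, using in the latter case that the quotient metric agrees with $d_X$ for points far from $A$ and close to each other. The only cosmetic difference is that the paper observes $(d_{X/A}(x_n,A))$ is Cauchy in $\R$ and hence converges, rather than phrasing the dichotomy via subsequences, but the substance is identical.
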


We also consider the separability of $\DD_\infty(X,A)$.  For  $1\leq p < \infty$, if $(X,A)$ is separable, then $\DD_p(X,A)$ is also separable (see \cite[Theorem B (2)]{CGGGMS}). This is no longer the case for $\DD_\infty(X,A)$. Indeed, for instance, the usual space of euclidean persistence diagrams with the bottleneck distance, $\DD_\infty(\mathbb R^2_{\geq 0},\Delta)$, is not separable (see Example~\ref{ex:usual-not-separable}), although this particular fact seems to be  known already (see \cite[Theorem~5]{BV18}). Here, we obtain the following characterization of separability for $\DD_\infty(X,A)$. Given $r > 0$ and a subset $Y \subseteq X$, we  let $B_r(Y)$  denote the open $r$-neighborhood of $Y$ in $X$, i.e.\ $B_r(Y)=\{ x \in X : d(x,y) < r \text{ for some } y \in Y \}$. Given $y \in X$, we write $B_r(y)$ for $B_r(\{y\})$.


\begin{theorem}[Separability] \label{thm:Di-separable}
The pseudometric space $\Di{X,A}$ is separable if and only if the subset $B_D(A) \setminus B_\delta(A) \subseteq X$ is totally bounded for all $D > \delta > 0$.
\end{theorem}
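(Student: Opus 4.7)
The plan is to prove the two implications separately. For the necessity, I will argue the contrapositive: if $B_D(A)\setminus B_\delta(A)$ fails to be totally bounded for some $D>\delta>0$, then there exist $\eta>0$ and an infinite $\eta$-separated family $\{x_n\}_{n\in\NN}\subseteq B_D(A)\setminus B_\delta(A)$. To each $\alpha\in\{0,1\}^\NN$ I attach the multiset $\sigma_\alpha:=\{x_n:\alpha_n=1\}$; since $\sup_{x\in\sigma_\alpha}d(x,A)\leq D<\infty$, each $\sigma_\alpha$ is a legitimate element of $\Di{X,A}$. For distinct $\alpha,\beta$, pick $k$ with $\alpha_k\ne\beta_k$, say $\alpha_k=1$ and $\beta_k=0$, so that $x_k\in\sigma_\alpha\setminus\sigma_\beta$. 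In any partial matching between $\sigma_\alpha$ and $\sigma_\beta$, the point $x_k$ is either paired with some $x_j\in\sigma_\beta$ with $j\ne k$ (at cost $\geq\eta$) or left unmatched with $A$ (at cost $\geq\delta$), so $d_\infty(\sigma_\alpha,\sigma_\beta)\geq\min(\delta,\eta)$. This produces an uncountable $\min(\delta,\eta)$-separated subset of $\Di{X,A}$, contradicting separability.

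For the sufficiency, assuming every annulus is totally bounded, I explicitly exhibit a countable dense set. For each $n\in\NN$, the totally bounded annulus $B_n(A)\setminus B_{1/n}(A)$ admits a countable dense subset $Q_n$, and $Q:=\bigcup_n Q_n$ is then countable and dense in $X\setminus A$ (every $x\in X\setminus A$ has $d(x,A)>0$ since $A$ is closed, so it lies in such an annulus for $n$ large enough). Let $P$ be the countable family of multisets of the form $\{q_1^{m_1},\ldots,q_k^{m_k}\}$ with $k\in\NN$, $q_i\in Q$ distinct and $m_i\in\NN\cup\{\infty\}$. Given $\sigma\in\Di{X,A}$, set $D:=\sup_{x\in\sigma}d(x,A)<\infty$, and for $\eps>0$ choose a finite $\eps/2$-net $F\subseteq Q$ of the totally bounded annulus $B_{D+1}(A)\setminus B_{\eps/2}(A)$ (possible because $Q$ is dense). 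For each $x\in\sigma$ with $d(x,A)\geq\eps/2$, pick a closest $f_x\in F$; define $\sigma':=\{f^{c(f)}:f\in F\}\in P$, where $c(f):=|\{x\in\sigma:f_x=f\}|\in\NN\cup\{\infty\}$. Matching each such $x$ with its copy of $f_x$ and leaving the remaining $\sigma$-points (those with $d(x,A)<\eps/2$) unmatched gives a partial matching with all costs at most $\eps/2$, so $d_\infty(\sigma,\sigma')\leq\eps/2<\eps$.

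The delicate point lies in the sufficiency direction: elements of $\Di{X,A}$ can contain infinitely many points bounded away from $A$, so they cannot in general be approximated by finite multisets in the bottleneck distance. The resolution is to allow the elements of $P$ to carry possibly \emph{infinite} multiplicities at the finitely many net points, which is precisely what keeps $P$ countable while affording enough flexibility to $\eps$-approximate an arbitrary countable multiset by collapsing each cluster of far-from-$A$ points onto a single $Q$-point with the appropriate multiplicity.
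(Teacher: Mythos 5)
Your proof is correct. The sufficiency direction is essentially the paper's argument: both construct a countable candidate dense set consisting of diagrams supported on finitely many points drawn from nets of the annuli, with multiplicities ranging over $\NN \cup \{\aleph_0\}$, and approximate an arbitrary $\sigma$ by collapsing each cluster of far-from-$A$ points onto a single net point and pushing the points within $\varepsilon/2$ of $A$ into $A$; your closing remark that the infinite multiplicities are the crux is exactly the point of the paper's construction as well. The necessity direction is where you genuinely diverge. The paper fixes an arbitrary countable family $\{\sigma_i\}$ and diagonalizes against it, building a single diagram $\tau = \mset{x_i : d(x,x_i) \geq \varepsilon/2 \text{ for all } x \in \sigma_i}$ from an infinite $\varepsilon$-separated subset $\{x_i\}$ of the annulus and verifying directly that $d_\infty(\tau,\sigma_i) \geq \varepsilon/2$ for every $i$. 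You instead exhibit, once and for all, the uncountable family $\{\sigma_\alpha\}_{\alpha \in \{0,1\}^\NN}$, show it is uniformly $\min(\delta,\eta)$-separated, and invoke the standard fact that a separable pseudometric space contains no uncountable uniformly separated subset. Both routes are complete; yours is the more classical non-separability argument and makes the cardinality obstruction explicit, whereas the paper's diagonal construction is self-contained and avoids appealing to that auxiliary fact. The only detail worth spelling out in your version is that this auxiliary fact holds for pseudometric (not just metric) spaces: if $D$ is countable and dense, assigning to each $\sigma_\alpha$ some $d_\alpha \in D$ with $d_\infty(\sigma_\alpha,d_\alpha) < \tfrac{1}{2}\min(\delta,\eta)$ gives an injection $\{0,1\}^\NN \to D$ by the triangle inequality, which is the desired contradiction.
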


Finally, we give a criterion for $\Di{X,A}$ to be a geodesic pseudometric space when $X$ is proper. Given $x,y \in X$, we call a rectifiable path $\gamma\colon [0,1] \to X$ a \emph{geodesic} from $x$ to $y$ if $\gamma(0) = x$, $\gamma(1) = y$, and $\gamma$ has length $d(x,y)$. A pseudometric space $X$ is \emph{geodesic} if any two points of $X$ can be joined by a geodesic.


\begin{theorem}[Geodesicity] \label{thm:Di-geodesic}
Let $X$ be a proper metric space. Then the pseudometric space $\Di{X,A}$ is geodesic if and only if any two points $x,y \in X/A$ such that either $d(x,y) < d(x,A)$ or $y=A$ can be joined by a geodesic in $X/A$.
\end{theorem}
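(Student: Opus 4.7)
Plan: The plan is to prove both directions by combining matching analysis with midpoint iteration, using that $X/A$ is complete (a Cauchy sequence not converging to the collapsed point lifts to a Cauchy sequence in the proper, hence complete, space $X$).

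For the forward direction, assume $\Di{X,A}$ is geodesic. The strategy is to extract midpoints in $X/A$ from geodesics in $\Di{X,A}$ between single-point (or empty) diagrams, and then iterate. Fix $x, y \in X/A$ with $d(x,y) < d(x,A)$, and let $D = d(x,y)$; the bottleneck distance between $\{x\}, \{y\} \in \Di{X,A}$ equals $D$ since unmatching costs at least $d(x,A) > D$. If $\gamma$ is a geodesic between them, consider $\mu = \gamma(1/2)$, at bottleneck distance $D/2$ from both endpoints. In any optimal matching $\{x\} \to \mu$ the point $x$ must be matched to some $z \in \mu$ (else $d(x,A) \leq D/2$, contradicting $d(x,A) > D$); similarly $y$ is matched to some $w \in \mu$, for if $y$ were unmatched, then every point of $\mu$, including $z$, would satisfy $d(\cdot,A) \leq D/2$, whence $d(x,A) \leq d(x,z) + d(z,A) \leq D$, a contradiction. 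I claim $z = w$: otherwise $z$ is unmatched in the matching $\mu \to \{y\}$, so $d(z,A) \leq D/2$, and the same distance chain yields the same contradiction $d(x,A) \leq D$. So $z$ is a midpoint of $(x,y)$ in $X/A$. The case $y = A$ is parallel: a geodesic from $\{x\}$ to $\emptyset$ yields $z \in \gamma(1/2)$ with $d(x,z) \leq d(x,A)/2$ (from matching to $\{x\}$) and $d(z,A) \leq d(x,A)/2$ (from matching to $\emptyset$), and the triangle inequality forces equalities. To iterate, I check that the hypothesis persists: for the subpair $(x, m)$ with $m$ the midpoint of $(x,y)$, $d(x,m) = D/2 < d(x,A)$ is immediate, while for $(m, y)$ I use $d(m,A) \geq d(x,A) - d(x,m) > D/2 = d(m,y)$. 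Similar bookkeeping applies to the $(x, A)$ iteration, so dyadic midpoints extend to a geodesic by completeness of $X/A$.

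For the reverse direction, let $\sigma, \tau \in \Di{X,A}$. By properness of $X$, there is an optimal matching $\phi$ realizing $D = d_\infty(\sigma, \tau)$. Unmatching any pair with $d(x,y) \geq \max(d(x,A), d(y,A))$ preserves optimality, so I may assume every matched pair $(x, y)$ satisfies $d(x,y) < \max(d(x,A), d(y,A))$. The hypothesis (applied to $(x,y)$ or $(y,x)$ as the relevant strict inequality dictates, or to $(x, A)$ for points matched to $A$) yields a constant-speed geodesic $\gamma_x \colon [0,1] \to X/A$ of length $d(x, \phi(x))$. Setting $\gamma(t)$ to be the multiset of points $\gamma_x(t)$ for $x \in \sigma$, together with the analogous paths $\gamma_y$ starting at $A$ for unmatched $y \in \tau$, and discarding any points lying in $A$, one easily checks $\gamma(0) = \sigma$ and $\gamma(1) = \tau$. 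The natural matching gives $d_\infty(\gamma(s), \gamma(t)) \leq \max_x |s-t| \, d(x, \phi(x)) \leq |s-t| D$, while the reverse inequality $d_\infty(\gamma(s), \gamma(t)) \geq D - sD - (1-t) D = (t-s) D$ follows from the triangle inequality in $\Di{X,A}$, so $\gamma$ is a geodesic.

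The main obstacle is the forward midpoint step: the identification $z = w$ rests on a delicate chain of distance inequalities exploiting the strict inequality $d(x,y) < d(x,A)$, and the iteration must be arranged so that this hypothesis persists at every dyadic scale. Verifying that the multiset $\gamma(t)$ in the reverse direction lies in $\Di{X,A}$ (satisfying the finiteness-above-$A$ condition defining the space) will also require some care, but should follow from the bound $d(\gamma_x(t), A) \leq d(x, A) + t \, d(x, \phi(x))$, which transfers the finiteness condition from $\sigma$ (and $\tau$) to $\gamma(t)$.
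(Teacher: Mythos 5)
Your ``only if'' direction takes a genuinely different route from the paper's: the paper tracks a single distinguished point of the diagrams $\xi_{x,y}(t)$ a quarter of the remaining distance at a time, exploiting a gap in the possible values of $d(v,a_0)$, and needs neither completeness nor properness, whereas you bisect and invoke completeness of $X/A$. Your scheme is workable, but you should not speak of ``any optimal matching'' $\mset{x}\to\mu$: since $\mu=\gamma(\tfrac12)$ may be an infinite diagram, the infimum over bijections need not be attained, so the identification $z=w$ has to be run with $\varepsilon$-optimal bijections, producing $\varepsilon$-midpoints $z_\varepsilon\in\mu$; you then need properness (not just completeness) to extract an actual midpoint from them, since approximate midpoints plus completeness only yield a length space.

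The ``if'' direction, however, has a genuine gap at its very first step: properness of $X$ does \emph{not} guarantee an optimal matching realizing $d_\infty(\sigma,\tau)$ between $\sigma$ and $\tau$ themselves. Take $X=\R$, $A=\{0\}$, $\sigma=\mset{1-\tfrac1n : n\ge 2}$ and $\tau=\mset{2}$: any bijection must send either some $1-\tfrac1n$ or a copy of $0$ to the point $2$, hence costs at least $1+\tfrac1n$ or $2$ respectively, so $d_\infty(\sigma,\tau)=1$ but the infimum is not attained. What is true---and is the main technical content of the paper's argument, namely Proposition~\ref{prop:Di-geodesic}---is that one can find diagrams $\sigma',\tau'$ with $d_\infty(\sigma,\sigma')=d_\infty(\tau,\tau')=0$ (obtained by adjoining limit points, such as the accumulation point $1$ in the example above) together with a bijection $\sigma'\to\tau'$ realizing $d_\infty(\sigma,\tau)$. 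Constructing this requires passing a sequence of near-optimal bijections to a limit consistently (the paper uses a non-principal ultrafilter) and then verifying $d_\infty(\sigma,\sigma')=0$ by an explicit shifting argument; your proposal assumes this entire step away, and a near-optimal matching of cost $D+\varepsilon$ cannot substitute for it, since interpolation along it only yields a path of length $D+\varepsilon$. Once this is supplied, the remainder of your construction (unmatching pairs with $d(x,y)\ge\max(d(x,A),d(y,A))$, interpolating along geodesics of $X/A$, and the two-sided length estimate) coincides with the paper's Lemma~\ref{lem:Di-geodesic-<=} and goes through, after noting that replacing the endpoints by $\sigma',\tau'$ at pseudodistance zero does not affect the geodesic property.
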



\begin{corollary} \label{cor:Di-geodesic}
If $X$ is proper and geodesic, then $\Di{X,A}$ is geodesic.
\end{corollary}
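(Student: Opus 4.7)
The plan is to deduce this from Theorem~\ref{thm:Di-geodesic}. Under the hypotheses that $X$ is proper and geodesic, I would verify the theorem's criterion by exhibiting geodesics in $X/A$ between any two points $x,y \in X/A$ satisfying either $y = A$ or $d(x,y) < d(x,A)$. Throughout, I would use the standard description of the quotient (pseudo)metric: the projection $\pi\colon X \to X/A$ is $1$-Lipschitz, $d_{X/A}(x,A)=d_X(x,A)$, and $d_{X/A}(x,y)=\min\{d_X(x,y),\,d_X(x,A)+d_X(y,A)\}$ for $x,y\in X\setminus A$.

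For the case $y=A$, I would argue that since $X$ is proper and $A$ is a closed subset of $X$, the distance $d_X(x,A)$ is realized by some $a_0\in A$: take a minimizing sequence in $A$, observe it lies in a bounded (hence compact, by properness) closed ball around $x$, and extract a convergent subsequence. Geodesicity of $X$ then furnishes a geodesic $\gamma\colon [0,1]\to X$ from $x$ to $a_0$ of length $d_X(x,A)$. Its projection $\pi\circ\gamma$ is a rectifiable path from $x$ to $A$ in $X/A$; it has length at most $d_X(x,A)$ because $\pi$ is $1$-Lipschitz, and length at least $d_{X/A}(x,A)=d_X(x,A)$ by the definition of distance, so it is a geodesic.

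For the case $y\neq A$ with $d_{X/A}(x,y)<d_{X/A}(x,A)=d_X(x,A)$, the formula for the quotient metric forces $d_{X/A}(x,y)=d_X(x,y)$, because the alternative value $d_X(x,A)+d_X(y,A)$ is at least $d_X(x,A)$, which strictly exceeds $d_{X/A}(x,y)$. Picking a geodesic $\gamma$ in $X$ from $x$ to $y$ of length $d_X(x,y)$ and projecting yields a path in $X/A$ from $x$ to $y$ of length at most $d_X(x,y)=d_{X/A}(x,y)$ and at least $d_{X/A}(x,y)$, hence exactly $d_{X/A}(x,y)$; it is therefore a geodesic. Note that $\gamma$ is allowed to pass through $A$, which causes no issue because $\pi$ simply sends any such subarc to the basepoint.

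The only obstacle is bookkeeping: one must check that projections of rectifiable paths in $X$ are rectifiable in $X/A$ with length bounded by the original, and combine this with the reverse bound coming from the definition of distance. Both facts are immediate from the quotient-metric construction and the $1$-Lipschitz property of $\pi$, so the real content lies in Theorem~\ref{thm:Di-geodesic}; the corollary is essentially a clean application.
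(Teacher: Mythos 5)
Your proof is correct and follows essentially the same route as the paper: reduce to Theorem~\ref{thm:Di-geodesic} and verify its criterion by projecting geodesics of $X$ under the $1$-Lipschitz quotient map, using properness to realize $d(x,A)$ by a point of $A$. The only cosmetic difference is that the paper proves the slightly stronger statement that $X/A$ is geodesic for \emph{all} pairs (adding one case where the quotient distance equals $d(x,A)+d(y,A)$ and two projected geodesics are concatenated through $A$), whereas you check exactly the two cases the theorem requires.
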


Our results are related to some of the theorems in \cite{BH}, where the authors study the space $\overline{D}_\infty(X,A)$ of persistence diagrams $\sigma$ such that the subdiagram $\mset{x \in \sigma : d(x,A) \geq \delta}$ is finite for every $\delta > 0$. We point out that $\overline{D}_\infty(X,A)$ is a closed subspace of $\Di{X,A}$.  However, the spaces $\Di{X,A}$ and $\overline{D}_\infty(X,A)$ do not necessarily satisfy the same metric properties. For instance, if $X$ is a metric space then so is $\overline{D}_\infty(X,A)$ (see the proof of \cite[Lemma~2.4(2)]{CGGGMS}); the analogous statement for $\Di{X,A}$, however, is far from being true (see Theorem~\ref{thm:Di-metric}).
In \cite[Theorem~6.2]{BH}, the authors showed that if $X$ is separable then so is $\overline{D}_\infty(X,A)$, but this is not necessarily true for $\Di{X,A}$ (see Example~\ref{ex:usual-not-separable}).
Moreover, if $A$ is distance minimizing (that is, for every $x \in X$ there exists $a \in A$ such that $d(x,a) = d(x,A)$) then there always exists an optimal bijection between two diagrams in $\overline{D}_\infty(X,A)$ \cite[Theorem~4.9]{BH}, and if in addition $X$ is geodesic then so is $\overline{D}_\infty(X,A)$ \cite[Theorem~5.10]{BH}. These two statements are no longer true for $\Di{X,A}$: see Example~\ref{ex:non-geodesic}.
The characterization of completeness of $\overline{D}_\infty(X,A)$ (see \cite[Theorem~6.3 \& Proposition~6.10]{BH}) does agree with the characterization for $\Di{X,A}$ (see Theorem~\ref{thm:Di-complete}); however, the proof given in \cite{BH} does not seem to be easy to adapt for the space $\Di{X,A}$.

It is also worth mentioning that in \cite{BH} the authors consider extended pseudometric spaces (see Definition~\ref{d:metric}), which are not necessarily metric spaces. Our results also hold if we replace metric pairs with pseudometric ones, and our proofs can be readily adapted with minimal modifications. The sole exception is the proof of Theorem~\ref{thm:Di-geodesic} (see Section~\ref{s:geodesicity}), which relies on the uniqueness of ultralimits in metric spaces. Ultralimits in a pseudometric space need not be unique. Nevertheless, our arguments remain valid if we select ultralimit points arbitrarily from their possible values. For simplicity, we have stated all our results for metric pairs.

Our article is organized as follows. In Section~\ref{s:preliminaries}, we recall the definition of the functors $\DD_p$, along with necessary background on persistence diagram spaces. Section~\ref{s:metrizability} contains the proof of Theorem~\ref{thm:Di-metric}. We then prove Theorem~\ref{thm:Di-complete} and Corollary~\ref{cor:Di-complete} in Section~\ref{s:completeness}. Finally, we prove Theorem~\ref{thm:Di-separable} in Section~\ref{s:separability}, and Theorem~\ref{thm:Di-geodesic} together with Corollary~\ref{cor:Di-geodesic} in Section~\ref{s:geodesicity}.

\begin{ack}
The authors wish to thank the referees for their helpful comments.
\end{ack}


\section{Preliminaries}
\label{s:preliminaries}

In this section, we collect preliminary material that we will use in the rest of the article. The contents of this section are based on \cite{CGGGMS}, where the reader may find further details. We refer the reader to \cite{BBI} for basic results on the geometry of metric spaces.


\begin{defn} \label{d:metric}
Let $X$ be a set.
\begin{enumerate}

\item A map $d\colon X\times X\to[0,\infty]$ is an \emph{extended pseudometric} on $X$ if it is symmetric and satisfies the triangle inequality, and a \emph{pseudometric} on $X$ if, in addition, $d(x,y)< \infty$ for $x,y\in X$.

\item An (extended) pseudometric $d$ on $X$ such that $d(x,y) = 0$ if and only if $x=y$ is an (\emph{extended}) \emph{metric}. When $d$ is an (extended) pseudometric on $X$, points at distance zero from each other give a partition of $X$, and $d$ induces an (extended) metric in the corresponding quotient set.

\item An (\emph{extended}) \emph{pseudometric space} $(X,d_X)$ is a set $X$ together with an (extended) pseudometric $d_X$. An (\emph{extended}) \emph{metric space} $(X,d_X)$ is a set $X$ together with an (extended) metric $d_X$.
\end{enumerate}
\end{defn}


\begin{defn}\label{d:metric pairs}
A \emph{metric pair} $(X,A)$ consists of a metric space $(X,d_X)$ and a non-empty closed subset $A\subseteq X$. 
 When $A$ is a point, we will say that $(X,\{a_0\})$ is a \emph{pointed metric space with basepoint} $a_0$. 
\end{defn}

We now define the space $\DD_\infty(X,A)$ of $\infty$-persistence diagrams on a metric pair $(X,A)$.
We will denote multisets by using two curly brackets $\mset{\cdot}$. Persistence diagrams will be denoted by Greek letters.

Let $(X,d)$ be a metric space. We denote by $\widetilde{\DD}(X)$ the set of countable multisets $\mset{x_1,x_2,\dots}$ of elements of $X$, and equip it with the  \emph{bottleneck distance} $\widetilde d_\infty$ given by 
\begin{equation}
\widetilde d_\infty(\widetilde\sigma,\widetilde\tau)=\inf_{\phi\colon \widetilde\sigma \to \widetilde\tau} \sup_{x \in \widetilde\sigma} d(x,\phi(x)),
\end{equation}
where $\phi$ ranges over all bijections between $\widetilde\sigma$ and $\widetilde\tau$ in $\widetilde{\DD}(X)$. Here, by convention, we set $\inf \varnothing = \infty$, that is, $\widetilde d_\infty(\widetilde\sigma,\widetilde\tau) = \infty$ whenever $\widetilde\sigma$ and $\widetilde\tau$ do not have the same cardinality. Observe that $\widetilde{d}_\infty$ is an extended pseudometric on $\widetilde{\DD}(X)$, making $(\widetilde\DD_\infty(X),\widetilde{d}_\infty)$ into an extended pseudometric space. We denote the empty multiset by $\widetilde \sigma_\varnothing$.

We may similarly define the \emph{$p$-Wasserstein distance} $\widetilde d_p$ by
\[
\widetilde d_p(\widetilde\sigma,\widetilde\tau) = \inf_{\phi\colon \widetilde\sigma\to \widetilde\tau} \left(\sum_{x\in \widetilde\sigma} d(x,\phi(x))^p\right)^{1/p}
\]
for any $p\in [1,\infty)$, which are extended pseudometrics on $\widetilde\DD(X)$ as well.

Given $\widetilde\sigma,\widetilde\tau \in \widetilde{\DD}(X)$, we define an equivalence relation on $\widetilde{\DD}(X)$ by setting $\widetilde\sigma \sim_A \widetilde\tau$ whenever there exist $\widetilde\alpha,\widetilde\beta \in \widetilde{\DD}(A)$ such that $\widetilde\sigma \sqcup \widetilde\alpha = \widetilde\tau \sqcup \widetilde\beta$. 
Let $\DD(X,A)$ be the quotient set $\widetilde{\DD}(X)/{\sim_A}$. Given $\widetilde\sigma \in \widetilde{\DD}(X)$, we let $\sigma$ be the equivalence class of $\widetilde\sigma$ in $\DD(X,A)$. Observe that $\sigma_\vn$, the equivalence class of $\widetilde\sigma_\vn$, is also the equivalence class of any $\widetilde\alpha\in \widetilde\DD(A)$.

The maps $\widetilde d_p$ induce functions $d_p\colon \DD(X,A) \times \DD(X,A) \to [0,\infty]$ given by
\begin{equation}
d_p(\sigma,\tau)= \inf_{\widetilde\alpha,\widetilde\beta\in\widetilde{\DD}(A)}\widetilde d_p(\widetilde\sigma \sqcup \widetilde\alpha,\widetilde\tau \sqcup \widetilde\beta).
\end{equation}


\begin{defn}\label{def:spaceofpd}
For any $p\in [1,\infty]$, the \emph{space of $p$-persistence diagrams} $\DD_p(X,A)$ on the pair $(X,A)$ is the set of all $\sigma \in \DD(X,A)$ such that $d_p(\sigma,\sigma_\varnothing) < \infty$.
\end{defn}

In particular, we have $d_p(\sigma,\sigma_\varnothing) = \left( \sum_{x \in \widetilde{\sigma}} d(x,A)^p \right)^{1/p}$ for any $p \in [1,\infty)$ and $d_\infty(\sigma,\sigma_\varnothing) = \sup_{x \in \widetilde{\sigma}} d(x,A)$. The inequality $d_p(\sigma,\sigma_\varnothing) < \infty$ may be interpreted as the $p$-norm of the distances of points in $\sigma$ to the set $A$ (the total $p$-persistence, in the original setting of persistence homology) being finite. 


\begin{prop}\label{lem:pair}
For any $p\in [1,\infty)$, the function $d_p$ is an extended metric on $\DD(X,A)$ and a metric on $\DD_p(X,A)$. On the other hand, the function $d_\infty$ is an extended pseudometric on $\DD(X,A)$ and a pseudometric on $\DD_\infty(X,A)$.
\end{prop}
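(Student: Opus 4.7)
The proof decomposes into (i) well-definedness of $d_p$ on the quotient $\DD(X,A)$, (ii) the pseudometric axioms—symmetry, reflexivity, and triangle inequality—and (iii) the identity of indiscernibles for $p<\infty$. For (i), if $\widetilde\sigma\sim_A\widetilde\sigma'$ via $\widetilde\sigma\sqcup\widetilde\alpha_0=\widetilde\sigma'\sqcup\widetilde\alpha_0'$, then for any admissible $(\widetilde\alpha,\widetilde\beta)$ the multisets $\widetilde\sigma\sqcup\widetilde\alpha_0\sqcup\widetilde\alpha$ and $\widetilde\sigma'\sqcup\widetilde\alpha_0'\sqcup\widetilde\alpha$ coincide, so the defining infima agree, and $d_p$ descends to $\DD(X,A)$. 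For (ii), symmetry is inherited from $\widetilde d_p$ and $d_p(\sigma,\sigma)=0$ holds by taking $\widetilde\alpha=\widetilde\beta$. For the triangle inequality, given $\sigma,\tau,\rho$ and $\epsilon>0$, I would pick near-optimal witnesses $(\widetilde\alpha_1,\widetilde\beta_1)$ and $(\widetilde\alpha_2,\widetilde\beta_2)$ for $d_p(\sigma,\tau)$ and $d_p(\tau,\rho)$, extend their bijections by the identity on the remaining diagonal points, and apply the triangle inequality for $\widetilde d_p$ to $\widetilde\sigma\sqcup\widetilde\alpha_1\sqcup\widetilde\alpha_2$, $\widetilde\tau\sqcup\widetilde\beta_1\sqcup\widetilde\alpha_2$, and $\widetilde\rho\sqcup\widetilde\beta_1\sqcup\widetilde\beta_2$, then let $\epsilon\to 0$. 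Consequently, $d_p$ is finite on $\DD_p(X,A)$ since $d_p(\sigma,\tau)\leq d_p(\sigma,\sigma_\varnothing)+d_p(\tau,\sigma_\varnothing)<\infty$.

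The main point is (iii). Suppose $\sigma,\tau\in\DD_p(X,A)$ with $d_p(\sigma,\tau)=0$ and $p<\infty$. Pick bijections $\phi_n\colon\widetilde\sigma\sqcup\widetilde\alpha_n\to\widetilde\tau\sqcup\widetilde\beta_n$ of total $p$-cost below $1/n^p$, so $d(y,\phi_n(y))<1/n$ for every $y$. For $x\in\widetilde\sigma$ with $r=d(x,A)>0$ and $n>1/r$, the image $\phi_n(x)$ cannot lie in $\widetilde\beta_n\subseteq A$—as this would force $d(x,\phi_n(x))\geq r>1/n$—hence $\phi_n(x)\in\widetilde\tau$ lies within distance $1/n$ of $x$. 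Fix $\delta>0$. Because $\sigma,\tau\in\DD_p$, the restricted multisets $\widetilde\sigma_{\geq\delta}:=\mset{y\in\widetilde\sigma:d(y,A)\geq\delta}$ and $\widetilde\tau_{\geq\delta/2}$ are finite. Therefore, for $n$ large, $\phi_n$ restricts to an injection $\widetilde\sigma_{\geq\delta}\hookrightarrow\widetilde\tau_{\geq\delta/2}$; pigeonholing across $n$ yields a subsequence along which this restricted injection is a single fixed map, and the convergence $d(x,\phi_n(x))\to 0$ forces that fixed map to be $x\mapsto x$. Hence $m_\sigma(x)\leq m_\tau(x)$ whenever $d(x,A)\geq\delta$; letting $\delta\to 0$ and exchanging the roles of $\sigma$ and $\tau$ gives equality of multiplicities on $X\setminus A$, so $\sigma=\tau$ in $\DD(X,A)$. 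For $p=\infty$, a bijection with small $\widetilde d_\infty$ controls only the maximum pairwise displacement and places no constraint on the number of displaced points, so the above argument fails and $d_\infty$ remains only a pseudometric, consistent with Theorem~\ref{thm:Di-metric}.

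The principal obstacle is the pigeonhole step in (iii): one must track multiplicities carefully within the finite restrictions, reconcile the limit as $\delta\to 0$, and verify that the constant injection extracted along the pigeonhole subsequence is forced to be the identity on elements rather than a merely near-isometric map.
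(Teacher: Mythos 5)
The paper states this proposition without proof, importing it from \cite{CGGGMS}, so your argument can only be compared with the standard one. Parts (i) and (ii) of your plan are correct and are exactly the usual padding argument: extending near-optimal bijections by the identity on the extra points of $A$ shows both that $d_p$ descends to the quotient and that the triangle inequality holds (via Minkowski's inequality for $p<\infty$, via the supremum for $p=\infty$); together with the finiteness estimate $d_p(\sigma,\tau)\leq d_p(\sigma,\sigma_\varnothing)+d_p(\tau,\sigma_\varnothing)$ this yields all the pseudometric claims, including those for $d_\infty$. Your part (iii) is also correct as written for $\sigma,\tau\in\DD_p(X,A)$ with $p<\infty$: the finiteness of $\widetilde\sigma_{\geq\delta}$ and $\widetilde\tau_{\geq\delta/2}$ does follow from $\sum_x d(x,A)^p<\infty$, the restrictions of $\phi_n$ are injections of finite multisets for $n>2/\delta$, the pigeonhole over the finitely many such injections is legitimate, and the extracted constant injection must fix every point because $d(x,\psi(x))<1/n_k\to 0$ and $X$ is a metric space. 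The worry you raise at the end about multiplicities is already absorbed by working with injections of multisets rather than of underlying sets. So the claim that $d_p$ is a metric on $\DD_p(X,A)$ is fully established.

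The one genuine gap is that the proposition also asserts that $d_p$ is an \emph{extended metric on all of} $\DD(X,A)$, and your identity-of-indiscernibles argument is confined to $\DD_p(X,A)$. The mixed case is easy (if $d_p(\sigma,\sigma_\varnothing)=\infty$ and $d_p(\tau,\sigma_\varnothing)<\infty$, the triangle inequality forces $d_p(\sigma,\tau)=\infty$), but when neither diagram has finite total $p$-persistence the finiteness of $\widetilde\sigma_{\geq\delta}$ --- the engine of your argument --- fails, and the argument cannot simply be repaired: taking $X=[0,2]$, $A=\{0\}$, $\widetilde\tau=\mset{1+\tfrac1k : k\geq 1}$ and $\widetilde\sigma=\widetilde\tau\sqcup\mset{1}$, the shift bijections sending $1\mapsto 1+\tfrac1N$, fixing $1+\tfrac1k$ for $k<N$ and sending $1+\tfrac1k\mapsto 1+\tfrac1{k+1}$ for $k\geq N$ have $\widetilde d_p$-cost tending to $0$ as $N\to\infty$ for every $p\in[1,\infty)$, so $d_p(\sigma,\tau)=0$ even though $\sigma\neq\tau$ in $\DD(X,A)$. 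You should therefore either restrict the statement you are proving to the metric claim on $\DD_p(X,A)$ (which is the part used elsewhere in the paper) or explicitly flag that the extended-metric clause on $\DD(X,A)$ requires separate treatment; as it stands, your proof silently proves less than the proposition asserts.
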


For simplicity, we will treat elements in $\DD(X,A)$ as multisets by themselves, with the understanding that, whenever we do so, we are actually dealing with representatives of such elements in $\widetilde \DD(X,A)$. Thus, for instance, when we write $x\in \sigma$, for $\sigma\in \DD(X,A)$, or consider bijections $\phi\colon \sigma\to\tau$, for $\sigma,\tau\in \DD(X,A)$, we mean  $x\in\widetilde\sigma$ and $\phi\colon \widetilde\sigma\to\widetilde\tau$ is a bijection for suitable representatives $\widetilde\sigma,\widetilde\tau\in \widetilde\DD(X)$. Observe that the spaces $\DD_p(X,A)$ and $\DD_p(X/A,\{A\})$ are naturally isometric for any $p\in [1,\infty]$.


\section{Metrizability: Proof of Theorem~\ref{thm:Di-metric}}
\label{s:metrizability}

Theorem~\ref{thm:Di-metric} follows from the following two lemmas. Throughout, we let $(X,A)$ be a metric pair.


\begin{lem} \label{lem:Di-metric}
If $X \setminus A$ is discrete, then $d_\infty(\sigma,\tau) \neq 0$ for any $\sigma,\tau \in \Di{X,A}$ with $\sigma \neq \tau$.
\end{lem}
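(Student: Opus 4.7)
The strategy is to show that whenever $\sigma \neq \tau$ in $\DD_\infty(X,A)$ we can exhibit an explicit positive lower bound on $d_\infty(\sigma,\tau)$. The bound will come from pinpointing a point $y \in X\setminus A$ at which the two diagrams genuinely disagree, and then using the discreteness hypothesis together with closedness of $A$ to ``protect'' the copies of $y$ from being matched to anything but themselves.

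First, I would unpack the equivalence relation $\sim_A$: if $\widetilde\sigma,\widetilde\tau \in \widetilde\DD(X)$ satisfy $\widetilde\sigma \sqcup \widetilde\alpha = \widetilde\tau \sqcup \widetilde\beta$ for some $\widetilde\alpha,\widetilde\beta \in \widetilde\DD(A)$, then $\widetilde\sigma$ and $\widetilde\tau$ assign the same multiplicity to every point of $X\setminus A$, and conversely. Hence the assumption $\sigma \neq \tau$ produces a point $y \in X\setminus A$ whose multiplicity in $\widetilde\sigma$ differs from its multiplicity in $\widetilde\tau$; up to swapping the two diagrams, write $m := \mathrm{mult}_{\widetilde\sigma}(y) < \mathrm{mult}_{\widetilde\tau}(y) =: n$, where $m \in \{0,1,2,\dots\}$ is finite and $n \in \{1,2,\dots,\aleph_0\}$.

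Next, I would extract two geometric quantities. Since $A$ is closed and $y \notin A$, the number $s := d(y,A)$ is strictly positive. Since $X\setminus A$ is discrete in the subspace topology, there exists $r > 0$ such that $B_r(y) \cap (X\setminus A) = \{y\}$. Set $\rho := \min(r,s) > 0$. The key consequence is that any $x \in X$ with $d(x,y) < \rho$ must satisfy $x = y$, because $x$ can lie neither in $A$ (by $\rho \leq s$) nor in $(X\setminus A)\setminus\{y\}$ (by $\rho \leq r$).

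Finally, I would argue by contradiction that $d_\infty(\sigma,\tau) \geq \rho$. Assume $d_\infty(\sigma,\tau) < \rho$; by the definition of $d_\infty$ there exist $\widetilde\alpha,\widetilde\beta \in \widetilde\DD(A)$ and a bijection $\phi\colon \widetilde\sigma \sqcup \widetilde\alpha \to \widetilde\tau\sqcup\widetilde\beta$ with $\sup_x d(x,\phi(x)) < \rho$. Because $\widetilde\alpha,\widetilde\beta$ are supported in $A$ while $y \notin A$, the multiplicities of $y$ in $\widetilde\sigma\sqcup\widetilde\alpha$ and in $\widetilde\tau\sqcup\widetilde\beta$ remain $m$ and $n$ respectively. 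The observation above forces any preimage of $y$ under $\phi$ (resp.\ $\phi^{-1}$) to equal $y$, so $\phi$ restricts to a bijection between the $m$ copies of $y$ on the left and the $n$ copies on the right. This contradicts $m < n$ (whether $n$ is finite or countably infinite), concluding the proof.

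The only real obstacle is the bookkeeping at the start: correctly identifying the equivalence $\sim_A$ with ``equality on $X\setminus A$'' so that the disagreement can be localized at a single point $y \in X\setminus A$. Once $y$ is in hand, the discreteness of $X\setminus A$ together with closedness of $A$ makes the matching argument essentially automatic.
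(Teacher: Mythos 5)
Your proof is correct and follows essentially the same route as the paper's: locate a point of $X\setminus A$ where the multiplicities differ, use discreteness together with closedness of $A$ to isolate it by a ball of positive radius, and conclude that every bijection must move some copy of that point by at least that radius. The only cosmetic difference is that you phrase the final step as a contradiction (a sub-$\rho$ matching would force equal multiplicities) while the paper directly bounds $\sup_z d(z,\gamma(z))$ from below for every bijection $\gamma$.
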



\begin{proof}
Since $\sigma \neq \tau$, there exists a point $x \in X \setminus A$ such that if $m_\sigma,m_\tau \in \NN \cup \{\aleph_0\}$ are the multiplicities of $x$ in $\sigma,\tau$ (respectively), then $m_\sigma \neq m_\tau$; without loss of generality, $m_\sigma > m_\tau$. As $X \setminus A$ is discrete, $\{x\}$ is open in $X \setminus A$, implying that $B_\varepsilon(x) \subseteq \{x\} \cup A$ for some $\varepsilon > 0$. As $x \notin A$ and $A$ is closed, we have $d(x,A) > 0$ and therefore we may assume (without loss of generality) that $\varepsilon \leq d(x,A)$; it then follows that $B_\varepsilon(x) = \{x\}$.

We claim that $d_\infty(\sigma,\tau) \geq \varepsilon$. Indeed, as $m_\sigma > m_\tau$, any bijection $\gamma\colon \sigma \to \tau$ must send a copy of $x$ in $\sigma$ to a point $y$ in $\tau$ such that $x \neq y$ in $X$, and therefore $d(x,y) \geq \varepsilon$. Thus $\sup_{z \in \sigma} d(z,\gamma(z)) \geq \varepsilon$; taking the infimum over all bijections $\gamma$ then gives $d_\infty(\sigma,\tau) \geq \varepsilon > 0$, as claimed.
\end{proof}


\begin{lem}
If $X \setminus A$ is not discrete, then there exist $\sigma,\tau \in \Di{X,A}$ such that $\sigma \neq \tau$ and $d_\infty(\sigma,\tau) = 0$.
\end{lem}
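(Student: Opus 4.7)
The plan is to exploit the failure of discreteness of $X \setminus A$ to produce a sequence accumulating at a non-isolated point, then build two diagrams that differ only by one extra copy of the limit point and show that a suitable finite shift gives arbitrarily good matchings.

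First I would unpack the hypothesis: since $X\setminus A$ (in the subspace metric) is not discrete, some $x \in X \setminus A$ is not isolated in $X \setminus A$. An inductive choice, requiring the $n$-th term to lie within $\min\bigl(1/n,\, d(x,y_1),\dots,d(x,y_{n-1})\bigr)$ of $x$ (which is positive since each $y_i \neq x$), yields a sequence of pairwise distinct points $y_n \in X \setminus A \setminus \{x\}$ with $y_n \to x$. I would then set $\widetilde\sigma = \mset{y_n : n \ge 1}$ and $\widetilde\tau = \mset{x} \sqcup \widetilde\sigma$ in $\widetilde{\DD}(X)$. Both of the resulting classes lie in $\Di{X,A}$: from $d(y_n,A) \le d(y_n,x) + d(x,A)$ and $y_n \to x$, the suprema $\sup_n d(y_n,A)$ and $d(x,A) \vee \sup_n d(y_n,A)$ are finite. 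For $\sigma \neq \tau$, I would argue by multiplicity: $x$ and each $y_n$ lie outside $A$ and are pairwise distinct, so for any $\widetilde\alpha,\widetilde\beta \in \widetilde{\DD}(A)$ the multiplicity of $x$ in $\widetilde\sigma \sqcup \widetilde\alpha$ is $0$ while in $\widetilde\tau \sqcup \widetilde\beta$ it is $1$, precluding the equality $\widetilde\sigma \sqcup \widetilde\alpha = \widetilde\tau \sqcup \widetilde\beta$ that would make $\sigma \sim_A \tau$.

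The heart of the proof is showing $d_\infty(\sigma,\tau) = 0$, which I would do by exhibiting, for each $N \ge 1$, the finite shift bijection $\gamma_N\colon \widetilde\sigma \to \widetilde\tau$ defined by
\[
\gamma_N(y_k) = \begin{cases} y_k & \text{if } k < N, \\ x & \text{if } k = N, \\ y_{k-1} & \text{if } k > N. \end{cases}
\]
Distinctness of the $y_n$ and the fact that $x$ differs from each $y_n$ ensure this is a genuine bijection of multisets. A direct estimate, using the triangle inequality on the third branch, gives
\[
\sup_{k\ge 1} d\bigl(y_k,\gamma_N(y_k)\bigr) \;\le\; \max\Bigl(d(y_N,x),\; 2\sup_{k\ge N} d(y_k,x)\Bigr),
\]
and both terms on the right tend to $0$ as $N\to\infty$ because $y_n \to x$. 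Taking $\widetilde\alpha = \widetilde\beta = \widetilde\sigma_\vn$ in the infimum that defines $d_\infty(\sigma,\tau)$ then yields $d_\infty(\sigma,\tau) \le \widetilde{d}_\infty(\widetilde\sigma,\widetilde\tau) = 0$.

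The only delicate points are bookkeeping ones: choosing the $y_n$ genuinely distinct so the shift is bijective at the multiset level, and handling the multiplicity-of-$x$ argument carefully so that it respects the equivalence relation $\sim_A$. Beyond this, the construction requires no tools beyond the definitions from Section~\ref{s:preliminaries}.
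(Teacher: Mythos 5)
Your proposal is correct and follows essentially the same route as the paper: pick a non-isolated point $x$ of $X \setminus A$, form $\sigma = \mset{y_n}$ and $\tau = \sigma \sqcup \mset{x}$ from a sequence $y_n \to x$ with $y_n \neq x$, and realize $d_\infty(\sigma,\tau) = 0$ via the finite shift bijections. Your extra care with pairwise distinctness and with the multiplicity argument for $\sigma \neq \tau$ is harmless (and slightly more explicit about the equivalence relation $\sim_A$ than the paper's ``$x \notin \sigma$ but $x \in \tau$''), but it does not change the substance of the argument.
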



\begin{proof}
Since $X \setminus A$ is not discrete, there exists a point $x \in X \setminus A$ such that $\{x\}$ is not open in $X \setminus A$. It follows that there exists a sequence $(x_n)_{n=0}^\infty$ in $X$ such that $x_n \neq x$ for any $n$ but $x_n \to x$ as $n \to \infty$. Now let $\sigma = \{\!\{ x_n : n \in \NN \}\!\}$ and $\tau = \sigma \sqcup \{\!\{ x \}\!\}$. Then clearly $\sigma,\tau \in \Di{X,A}$ (as the sequence $(d(x_n,A))_{n=0}^\infty$ in $\R$ converges to $d(x,A)$ and so is bounded), and $\sigma \neq \tau$ since $x \notin \sigma$ but $x \in \tau$.

Now let $\varepsilon > 0$, and let $N = N_\varepsilon \in \NN$ be such that $d(x_n,x) \leq \varepsilon/2$ for any $n \geq N$. Define a bijection $\gamma_\varepsilon\colon \sigma \to \tau$ by $\gamma_\varepsilon(x_n) = x_n$ for $n < N$, $\gamma_\varepsilon(x_N) = x$, and $\gamma_\varepsilon(x_n) = x_{n-1}$ for $n > N$. We then have $d(x_n,\gamma_\varepsilon(x_n)) = 0$ for $n < N$, $d(x_N,\gamma_\varepsilon(x_N)) = d(x_N,x) \leq \varepsilon/2$, and $d(x_n,\gamma_\varepsilon(x_n)) = d(x_n,x_{n-1}) \leq d(x_n,x)+d(x_{n-1},x) \leq \varepsilon$ for $n > N$. This implies that $\sup_{y \in \sigma} d(y,\gamma_\varepsilon(y)) \leq \varepsilon$, and therefore $d_\infty(\sigma,\tau) \leq \varepsilon$. As $\varepsilon$ was arbitrary, it follows that $d_\infty(\sigma,\tau) = 0$, as required.
\end{proof}


\section{Completeness: Proofs of Theorem~\ref{thm:Di-complete} and Corollary~\ref{cor:Di-complete}} 
\label{s:completeness}

Theorem~\ref{thm:Di-complete} follows from the following two lemmas.

\begin{lem}
\label{lem:Dp.complete.then.X.complete}
For any $p\in [1,\infty]$, if $\DD_p(X,\{a_0\})$ is complete, then so is $X$.
\end{lem}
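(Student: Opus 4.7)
The plan is to show convergence of an arbitrary Cauchy sequence $(x_n)$ in $X$ by lifting it to $\DD_p(X,\{a_0\})$, extracting a limit there, and using this limit to produce a limit of $(x_n)$ in $X$.

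First I would define the singleton diagrams $\sigma_n := \mset{x_n}\in\DD_p(X,\{a_0\})$; these lie in $\DD_p$ since $d_p(\sigma_n,\sigma_\varnothing)=d(x_n,a_0)<\infty$. By taking the trivial matching that pairs $x_n$ with $x_m$ (and introduces no copies of $a_0$), one gets $d_p(\sigma_n,\sigma_m)\leq d(x_n,x_m)$, so the sequence $(\sigma_n)$ is Cauchy in $\DD_p(X,\{a_0\})$. By the assumed completeness, $\sigma_n\to\tau$ for some $\tau\in\DD_p(X,\{a_0\})$. Using the equivalence $\sim_A$, I choose a representative $\widetilde\tau$ containing no copies of $a_0$.

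The key step is to show that $\widetilde\tau$ has at most one element (counted with multiplicity). Suppose for contradiction that $\widetilde\tau$ contains two elements $y_1,y_2$; since $y_1,y_2\neq a_0$, the constant $c:=\min\{d(y_1,a_0),d(y_2,a_0)\}$ is strictly positive. In any bijection $\phi\colon\mset{x_n}\sqcup\widetilde\alpha\to\widetilde\tau\sqcup\widetilde\beta$ with $\widetilde\alpha,\widetilde\beta$ multisets of $a_0$'s, at most one of $y_1,y_2$ can be paired with $x_n$; the other, say $y_j$, is paired with a copy of $a_0$, contributing $d(y_j,a_0)\geq c$ to the bottleneck when $p=\infty$ and $d(y_j,a_0)^p\geq c^p$ to the $p$-th power sum when $p<\infty$. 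Either way $\widetilde d_p\geq c$, whence $d_p(\sigma_n,\tau)\geq c$ for all $n$, contradicting $\sigma_n\to\tau$.

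To finish, if $\widetilde\tau=\varnothing$ then $d_p(\sigma_n,\sigma_\varnothing)=d(x_n,a_0)\to 0$, so $x_n\to a_0$ in $X$. If instead $\widetilde\tau=\mset{y}$ for some $y\neq a_0$, a direct calculation gives $d_p(\sigma_n,\sigma_y)=\min\bigl(d(x_n,y),C_n\bigr)$ for a quantity $C_n$ bounded below by $d(y,a_0)>0$; hence $d_p(\sigma_n,\sigma_y)\to 0$ forces $d(x_n,y)\to 0$, i.e.\ $x_n\to y$. The main obstacle, as I see it, is the key step above: one must carefully handle the case that $\widetilde\tau$ may be infinite and have elements accumulating at $a_0$, and observe that nonetheless the existence of any two fixed elements of $\widetilde\tau$ already produces a uniform positive lower bound on $d_p(\sigma_n,\tau)$, independent of $n$.
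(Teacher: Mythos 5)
Your proposal is correct and follows essentially the same route as the paper: both lift the Cauchy sequence $(x_n)$ to the singleton diagrams $\mset{x_n}$, use the $1$-Lipschitz bound $d_p(\sigma_n,\sigma_m)\leq d(x_n,x_m)$ together with completeness of $\DD_p(X,\{a_0\})$ to obtain a limit diagram, and then recover the limit of $(x_n)$ from that diagram. The only (harmless) difference is in how the limit diagram is pinned down: the paper first splits on whether $d(x_n,a_0)\to 0$ and then isolates the unique point of the limit diagram outside $B_{\delta/2}(a_0)$, whereas you prove directly, via the uniform lower bound $c=\min\{d(y_1,a_0),d(y_2,a_0)\}$, that the limit diagram has at most one non-basepoint element.
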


\begin{proof}
Let $(x_n)_{n=0}^\infty$ be a Cauchy sequence in $X$. Then $(d(x_n,a_0))_{n=0}^\infty$ is a Cauchy sequence in $\R$, and so converges. If $d(x_n,a_0) \to 0$ as $n \to \infty$ then $x_n \to a_0$ and so the sequence $(x_n)$ converges. Thus, we may assume that $d(x_n,a_0) \to \delta$ as $n \to \infty$ for some $\delta > 0$.

Now for each $n \in \NN$ let $\sigma_n = \mset{ x_n} \in \DD_p(X,\{a_0\})$. For each $n,m \in \NN$ there is a bijection $\sigma_n \to \sigma_m$ sending $x_n \mapsto x_m$, implying that $d_p(\sigma_n,\sigma_m) \leq d(x_n,x_m)$. Thus, $(\sigma_n)_{n=0}^\infty$ is a Cauchy sequence in $\DD_p(X,\{a_0\})$ and so converges to some $\sigma \in \DD_p(X,\{a_0\})$. Now let $\varepsilon \in (0,\delta/2]$. Then there exists $N_\varepsilon \in \NN$ such that $d_p(\sigma_n,\sigma) < \varepsilon$, and so there exists a bijection $\gamma_n\colon \sigma_n \to \sigma$ such that $d(x',\gamma_n(x')) < \varepsilon \leq \delta/2$ for every $x' \in \sigma_n$. This implies that $\sigma$ contains a unique point $x \in X \setminus B_{\delta/2}(a_0)$ and that $\gamma_n(x_n) = x$, and hence $d(x_n,x) < \varepsilon$, for every $n \geq N_\varepsilon$. Therefore, the sequence $(x_n)$ converges to $x \in X$, as required.
\end{proof}

\begin{lem}
\label{lem:X.complete.then.Dinfty.complete}Let $\ell_\infty(X,a_0)$  be the space of all sequences $(x_n)_{n=0}^\infty$ in $X$ such that $\sup_{n \in \NN} d(x_n,a_0) < \infty$, equipped with the (pseudo-)metric $\widehat{d}_\infty((x_n),(y_n)) = \sup_{n \in \NN} d(x_n,y_n)$. Let $\Phi\colon \ell_\infty(X,a_0) \to \Di{X,\{a_0\}}$ be the map sending each sequence to the (equivalence class of) the corresponding multiset. Then the following statements hold:
\begin{enumerate}
\item $\Phi$ is a continuous map.
\item If $X$ is complete then $\ell_\infty(X,a_0)$ is complete.
\item Let $(\sigma_n)_{n=0}^\infty$ be a Cauchy sequence in $\Di{X,\{a_0\}}$. Then there exists a Cauchy sequence $(\widehat\sigma_n)_{n=0}^\infty$ in $\ell_\infty(X,a_0)$ such that $\Phi(\widehat\sigma_n) = \sigma_n$ for all~$n$.
\end{enumerate}
\end{lem}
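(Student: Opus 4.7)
The plan is to dispatch (1) and (2) as standard facts and devote the bulk of the argument to (3).

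For (1), the map $\Phi$ is $1$-Lipschitz: given $\widehat\sigma = (x_n)$ and $\widehat\tau = (y_n)$ in $\ell_\infty(X,a_0)$, the index-wise matching $x_n \mapsto y_n$ is a bijection between the underlying multisets, so $d_\infty(\Phi(\widehat\sigma),\Phi(\widehat\tau)) \leq \sup_n d(x_n,y_n) = \widehat d_\infty(\widehat\sigma,\widehat\tau)$. For (2), I would show coordinatewise convergence: a Cauchy sequence $(\widehat\sigma^{(k)})$ in $\ell_\infty(X,a_0)$ is coordinatewise Cauchy in $X$ (since the individual distances are dominated by $\widehat d_\infty$), hence coordinatewise convergent by completeness of $X$, and a standard uniform-Cauchy argument shows the limit sequence lies in $\ell_\infty(X,a_0)$ and is the $\widehat d_\infty$-limit.

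For (3), the plan is a two-stage construction. First, using the Cauchy property, extract indices $n_0 < n_1 < \cdots$ such that $d_\infty(\sigma_n,\sigma_{n_k}) < 2^{-k-1}$ for every $n \geq n_k$. For each $k$, choose a bijection $\phi_k\colon \sigma_{n_k} \to \sigma_{n_{k+1}}$ of displacement less than $2^{-k}$. Fix an arbitrary enumeration $\widehat\sigma_{n_0}$ of $\sigma_{n_0}$ (which lies in $\ell_\infty(X,a_0)$ because $d_\infty(\sigma_{n_0},\sigma_\varnothing) < \infty$), and recursively define $\widehat\sigma_{n_{k+1}}$ by applying $\phi_k$ coordinatewise. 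Second, fill in the remaining indices: for $n$ with $n_k < n < n_{k+1}$, pick a bijection $\psi_n\colon \sigma_{n_k} \to \sigma_n$ of displacement less than $2^{-k}$ and define $\widehat\sigma_n$ by applying $\psi_n$ coordinatewise to $\widehat\sigma_{n_k}$; for $n < n_0$, choose any enumeration. The Cauchy condition for $(\widehat\sigma_n)$ then follows from a triangle inequality across the subsequence, with each piece controlled by the geometric bounds $2^{-k}$.

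The main obstacle is precisely this coordination: a Cauchy sequence of multisets provides no canonical ordering, so naive enumeration choices will generally accumulate displacement and fail to be Cauchy. Passing to a geometrically converging subsequence is essential to obtain summable displacements, and the two-stage construction (subsequence first, then intermediate indices routed through the chosen $\sigma_{n_k}$) is needed to satisfy the requirement $\Phi(\widehat\sigma_n) = \sigma_n$ for every $n$, not merely for the subsequence.
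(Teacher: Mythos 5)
Your proposal is correct and follows essentially the same route as the paper: route all enumerations through a geometrically converging subsequence by composing near-optimal bijections, and bridge the intermediate indices from the nearest subsequence term. The only point you elide, which the paper handles explicitly, is that the bijections $\phi_k$ and $\psi_n$ are really bijections between \emph{representatives} of the diagrams, so one should first fix representatives $\widetilde\sigma_n$ containing $a_0$ with infinite multiplicity in order for the compositions to be well defined (and for each enumeration to be indexed by all of $\NN$); this is a routine adjustment.
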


\begin{proof}
Clearly $d_\infty(\Phi(\widehat\sigma),\Phi(\widehat\tau)) \leq \widehat{d}_\infty(\widehat\sigma,\widehat\tau)$ for any $\widehat\sigma,\widehat\tau \in \ell_\infty(X,a_0)$; in particular, $\Phi$ is continuous. Moreover, it is easy to see that the arguments in the ``usual'' proof of the fact that $\ell_\infty = \ell_\infty(\R,0)$ is complete may be adapted to proof that if $X$ is complete, then so is $\ell_\infty(X,a_0)$.

Now let $(\sigma_n)_{n=0}^\infty$ be a Cauchy sequence in $\Di{X,\{a_0\}}$. Then for each $k \in \NN$ there exists $N_k \in \NN$ such that $d_\infty(\sigma_n,\sigma_m) \leq 2^{-k}$ for all $n,m \geq N_k$. In particular, for each $k \in \NN$ and $n \geq N_k$ there exists a bijection $\gamma_{k,n} \colon \sigma_{N_k} \to \sigma_n$ such that $d(x,\gamma_{k,n}(x)) \leq 2^{1-k}$ for all $x \in \sigma_{N_k}$. Without loss of generality, assume that $N_k \leq N_\ell$ whenever $k \leq \ell$ and that $N_k \to \infty$ as $k \to \infty$.

For each $n$, let $\widetilde\sigma_n$ be a countable multiset of points in $X$ representing $\sigma_n$ such that $\widetilde\sigma_n$ contains $a_0$ with infinite multiplicity. Then, for each $k \in \NN$ and $n \geq N_k$, there exists a bijection $\widetilde\gamma_{k,n} \colon \widetilde\sigma_{N_k} \to \widetilde\sigma_n$ inducing $\gamma_{k,n}$; in particular, we have $d(x,\widetilde\gamma_{k,n}(x)) \leq 2^{1-k}$ for all $x \in \widetilde\sigma_{N_k}$. We now construct a sequence $(\widehat\sigma_n)_{n=0}^\infty$ in $\ell_\infty(X,a_0)$ as follows:
\begin{itemize}
\item for $n \leq N_0$, pick an arbitrary bijection $\psi_n\colon \NN \to \widetilde\sigma_n$ and let $\widehat\sigma_n = (\psi_n(m))_{m=0}^\infty$;
\item for $n > N_0$, let $k \in \NN$ be such that $N_k < n \leq N_{k+1}$ and let 
\[
\widehat\sigma_n = \left( (\widetilde\gamma_{k,n} \circ \widetilde\gamma_{k-1,N_k} \circ \cdots \circ \widetilde\gamma_{1,N_2} \circ \widetilde\gamma_{0,N_1} \circ \psi_{N_0})(m) \right)_{m=0}^\infty;
\]
\end{itemize}
see Figure~\ref{fig:lem-complete}. It follows from the construction that $\Phi(\widehat\sigma_n) = \sigma_n$ for all~$n$. We claim that $(\widehat\sigma_n)_{n=0}^\infty$ is a Cauchy sequence.

Indeed, let $\varepsilon > 0$, and let $k \in \NN$ be such that $2^{3-k} \leq \varepsilon$. Note that for any $k',n \in \NN$ such that $N_{k'} < n \leq N_{k'+1}$ we have $\widehat{d}_\infty(\widehat\sigma_{N_{k'}},\widehat\sigma_n) \leq \sup_{x \in \widetilde\sigma_{N_{k'}}} d(x,\widetilde\gamma_{k',n}(x)) \leq 2^{1-k'}$. In particular, if in addition $k' \geq k$ then we have
\begin{align*}
\widehat{d}_\infty(\widehat\sigma_{N_k},\widehat\sigma_n) &\leq \left[ \sum_{\ell=k}^{k'-1} \widehat{d}_\infty(\widehat\sigma_{N_\ell},\widehat\sigma_{N_{\ell+1}}) \right] + \widehat{d}_\infty(\widehat\sigma_{N_{k'}},\widehat\sigma_n) \leq \sum_{\ell=k-1}^{k'-1} 2^{-\ell} \\ &< \sum_{\ell=k-1}^\infty 2^{-\ell} = 2^{2-k} \leq \frac{\varepsilon}{2}
\end{align*}
for any $n > N_k$. Therefore, for all $n,m > N_k$ we have $\widehat{d}_\infty(\widehat\sigma_n,\widehat\sigma_m) \leq \widehat{d}_\infty(\widehat\sigma_{N_k},\widehat\sigma_n) + \widehat{d}_\infty(\widehat\sigma_{N_k},\widehat\sigma_m) \leq \varepsilon$. Thus the sequence $(\widehat\sigma_n)_{n=0}^\infty$ is Cauchy, as claimed.
\end{proof}

\begin{figure}[ht]
    \centering
\begin{tikzpicture}
\draw [red,thick,->] (0,0) to[in=180] node [pos=0.6,above,rotate=5] {$\widetilde\gamma_{0,N_0+1}$} (6.9,2);
\draw [red,thick,->] (0,0) to node [pos=0.6,above] {$\widetilde\gamma_{0,N_1}$} (6.9,0);
\draw [red!80!blue,thick,->] (7,0) to[in=180] node [pos=0.6,above,rotate=20] {$\widetilde\gamma_{1,N_1+1}$} (10.9,2);
\draw [red!80!blue,thick,->] (7,0) to node [pos=0.6,above] {$\widetilde\gamma_{1,N_2}$} (10.9,0);
\draw [red!60!blue,thick,->] (11,0) to[in=180] node [pos=0.6,above,rotate=50] {$\widetilde\gamma_{2,N_2+1}$} (12.9,2);
\draw [red!60!blue,thick,->] (11,0) to node [pos=0.6,above] {$\widetilde\gamma_{2,N_3}$} (12.9,0);
\draw [red!40!blue,thick] (13,0) to[out=45,in=-120] (13.4,0.6) (13,0) -- (13.4,0);
\draw [red!40!blue,thick,densely dotted] (13.4,0.6) to[out=60,in=-150] (13.8,1) (13.4,0) -- (13.8,0);
\draw [green!70!black,ultra thick,loosely dotted] (0,0.28) -- (0,1.72) (7,0.28) -- (7,1.72) (11,0.28) -- (11,1.72) (13,0.28) -- (13,1.72);
\draw [fill=green!70!black] (0,2) circle (2pt) node [above] {$x_0 = \psi_0(m)$};
\draw [fill=green!70!black] (0,0) circle (2pt) node [below] {$x_{N_0} = \psi_{N_0}(m)$};
\draw [fill=green!70!black] (7,2) circle (2pt) node [above] {$x_{N_0+1}$};
\draw [fill=green!70!black] (7,0) circle (2pt) node [below,yshift=-2pt] {$x_{N_1}$};
\draw [fill=green!70!black] (11,2) circle (2pt) node [above] {$x_{N_1+1}$};
\draw [fill=green!70!black] (11,0) circle (2pt) node [below,yshift=-2pt] {$x_{N_2}$};
\draw [fill=green!70!black] (13,2) circle (2pt) node [above] {$x_{N_2+1}$};
\draw [fill=green!70!black] (13,0) circle (2pt) node [below,yshift=-2pt] {$x_{N_3}$};
\draw (0,-0.7) -- (0,-1.3) (7,-0.7) -- (7,-1.3) (11,-0.7) -- (11,-1.3) (13,-0.7) -- (13,-1.3);
\draw [red,<->] (0.05,-1) -- (6.95,-1) node [midway,below] {$\leq 2$};
\draw [red!80!blue,<->] (7.05,-1) -- (10.95,-1) node [midway,below] {$\leq 1$};
\draw [red!60!blue,<->] (11.05,-1) -- (12.95,-1) node [midway,below,yshift=2pt] {$\leq \frac{1}{2}$};
\end{tikzpicture}
\caption{The proof of Lemma~\ref{lem:X.complete.then.Dinfty.complete}: construction of the sequence $(x_n)_{n=0}^\infty = \big((\widehat\sigma_n)_m\big)_{n=0}^\infty$ of points in $X$ for a fixed $m \in \NN$.}
    \label{fig:lem-complete}
\end{figure}
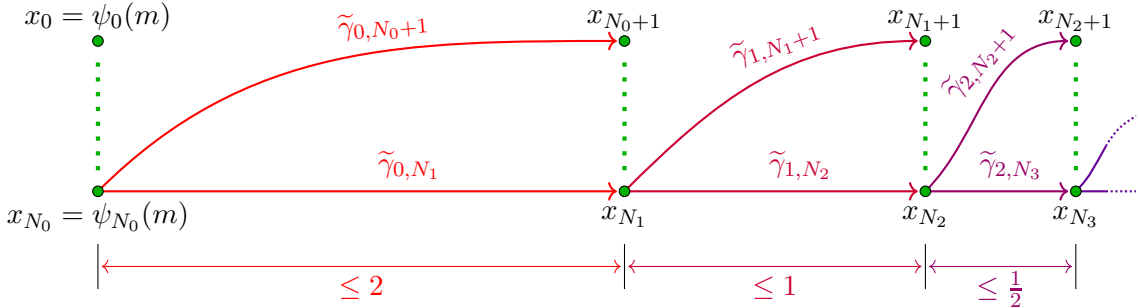

\begin{proof}[Proof of Theorem~\ref{thm:Di-complete}]
Let $p\in [1,\infty]$ and let $(X,A)$ be a metric pair.
In view of the isometry $\DD_p(X,A) \cong \DD_p(X/A,\{A\})$, $p\in [1,\infty]$, Theorem~\ref{thm:Di-complete} is equivalent to the statement that $\DD_p(X,\{a_0\})$ is complete if and only if $X$ is complete, where $a_0 \in X$ is any point. Thus, the ``only if'' implication of Theorem~\ref{thm:Di-complete} follows from Lemma~\ref{lem:Dp.complete.then.X.complete}. Suppose now that $X$ is complete. If $1\leq p<\infty$, then $\DD_p(X,A)$ is complete by \cite[Theorem B (1)]{CGGGMS}. If $p=\infty$, then it follows from Lemma~\ref{lem:X.complete.then.Dinfty.complete} that $\DD_\infty(X,A)$ is complete. Thus, $\DD_p(X,A)$ is complete for all $p\in [1,\infty]$.
\end{proof}

We now prove Corollary~\ref{cor:Di-complete}.

\begin{proof}[Proof of Corollary~\ref{cor:Di-complete}]
By Theorem~\ref{thm:Di-complete}, it is enough to show that if $X$ is complete then so is $X/A$. Thus, let $(x_n)_{n=0}^\infty$ be a Cauchy sequence in $X/A$. Then $(d_{X/A}(x_n,A))_{n=0}^\infty$ is a Cauchy sequence in $\R$, and so converges. If $d_{X/A}(x_n,A) \to 0$ as $n \to \infty$ then $x_n \to A$. Otherwise, $d_{X/A}(x_n,A) \to \delta$ as $n \to \infty$ for some $\delta > 0$. As $(x_n)$ is Cauchy, it follows that for each $\varepsilon \in (0,\delta/2)$ there exists $N_\varepsilon$ such that for all $n,m \geq N_\varepsilon$ we have $d_{X/A}(x_n,x_m) \leq \varepsilon < \delta/2$, implying that $x_n,x_m \notin B_{\delta/2}(A)$ (in particular, $x_n,x_m \neq A$) and $d_X(x_n,x_m) = d_{X/A}(x_n,x_m)$. In particular, $(x_n)$ is also a Cauchy sequence in $X$, and so converges to some $x \in X \setminus A$; but then we also have $x_n \to x$ in $X/A$ as $n \to \infty$.
\end{proof}


\section{Separability: Proof of Theorem~\ref{thm:Di-separable}}
\label{s:separability}

Theorem~\ref{thm:Di-separable} follows from the following two results.


\begin{lem}
If $B_D(A) \setminus B_\delta(A)$ is totally bounded for all $D > \delta > 0$, then $\Di{X,A}$ is separable.
\end{lem}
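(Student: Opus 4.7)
The plan is to construct an explicit countable dense subset $\mathcal{C} \subseteq \Di{X,A}$. The guiding intuition is that two mechanisms can absorb error: points of a diagram lying close to $A$ can be matched to $A$ itself (via the equivalence relation $\sim_A$) at cost equal to their distance to $A$, while the remaining points all lie in some annulus $B_D(A) \setminus B_\varepsilon(A)$, which by hypothesis is totally bounded and therefore admits a finite $\varepsilon$-net.

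Concretely, for each pair of positive rationals $D > \varepsilon > 0$, I would fix a finite $\varepsilon$-net $S_{D,\varepsilon} \subseteq B_D(A) \setminus B_\varepsilon(A)$ and let $\mathcal{C}_{D,\varepsilon} \subseteq \Di{X,A}$ be the collection of equivalence classes of countable multisets supported on $S_{D,\varepsilon}$. Such a multiset is determined by its multiplicity function $S_{D,\varepsilon} \to \NN \cup \{\aleph_0\}$; since $S_{D,\varepsilon}$ is finite, there are only countably many such functions, so $\mathcal{C}_{D,\varepsilon}$ is countable. Then $\mathcal{C} := \bigcup \mathcal{C}_{D,\varepsilon}$, taken over rational $D > \varepsilon > 0$, is a countable union of countable sets, hence countable.

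For density, given $\sigma \in \Di{X,A}$ and $\eta > 0$, I would choose rationals $\varepsilon \in (0,\eta)$ and $D > \max(\sup_{x \in \sigma} d(x,A),\varepsilon)$ (using that $d_\infty(\sigma,\sigma_\vn) < \infty$ by definition of $\Di{X,A}$). Then I would split a representative $\widetilde\sigma$ of $\sigma$ as $\widetilde\sigma_{\geq\varepsilon} \sqcup \widetilde\sigma_{<\varepsilon}$ according to whether $d(x,A) \geq \varepsilon$ or not. Each $x \in \widetilde\sigma_{\geq\varepsilon}$ lies in $B_D(A) \setminus B_\varepsilon(A)$ and so admits some $y_x \in S_{D,\varepsilon}$ with $d(x,y_x) < \varepsilon$; for each $x \in \widetilde\sigma_{<\varepsilon}$ I would pick $a_x \in A$ with $d(x,a_x) < \varepsilon$. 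Setting $\widetilde\tau = \mset{y_x : x \in \widetilde\sigma_{\geq\varepsilon}}$ and $\widetilde\beta = \mset{a_x : x \in \widetilde\sigma_{<\varepsilon}} \in \widetilde\DD(A)$, the tautological bijection $\widetilde\sigma \to \widetilde\tau \sqcup \widetilde\beta$ displaces every point by less than $\varepsilon$, yielding $d_\infty(\sigma,\tau) \leq \varepsilon < \eta$ where $\tau \in \mathcal{C}_{D,\varepsilon}$ is the class of $\widetilde\tau$.

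The step I expect to require the most attention is the countability of $\mathcal{C}_{D,\varepsilon}$: in contrast to the Wasserstein setting, elements of $\Di{X,A}$ may consist of infinitely many points all uniformly far from $A$, so a dense set built from finite multisets alone would not work. The crucial observation is that one must allow multisets with possibly infinite multiplicities, which is nevertheless harmless for countability because the support $S_{D,\varepsilon}$ itself is finite. Everything else is a routine verification using the formula defining $d_\infty$ via the equivalence relation $\sim_A$.
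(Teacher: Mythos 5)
Your proposal is correct and follows essentially the same route as the paper: a countable dense family built from multisets supported on finite nets of the annuli $B_D(A)\setminus B_\varepsilon(A)$, with the key point in both arguments being that infinite multiplicities must be allowed yet countability survives because each net is finite. The only (cosmetic) differences are that the paper indexes by $n\in\NN$ with annuli $B_n(A)\setminus B_{1/n}(A)$ and disjointifies the covering balls into sets $U_m^{(n)}$ to define the approximating multiplicities canonically, whereas you index by rational pairs $(D,\varepsilon)$ and assign each point of $\sigma$ to a chosen nearby net point directly.
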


\begin{proof}
Let $n \in \NN_{\geq 1}$. Then $B_n(A) \setminus B_{1/n}(A)$ is totally bounded, and so there exist $x_1^{(n)},\ldots,x_{k_n}^{(n)} \in X$ such that $B_n(A) \setminus B_{1/n}(A) \subseteq \bigcup_{m=1}^{k_n} B_{1/n}\left(x_m^{(n)}\right)$. For $m \in \{1,\ldots,k_n\}$, we define $U_m^{(n)}$ inductively as
\[
U_m^{(n)} = \left[ B_n(A) \cap B_{1/n}\left(x_m^{(n)}\right) \right] \setminus \left[ B_{1/n}(A) \cap B_{1/n}\left(x_m^{(n)}\right) \cap \bigcup_{\ell=1}^{m-1} U_\ell^{(n)} \right].
\]
We thus get a collection of pairwise disjoint subsets $U_1^{(n)},\ldots,U_{k_n}^{(n)} \subseteq X$ such that $B_n(A) \setminus B_{1/n}(A) = \bigcup_{m=1}^{k_n} U_m^{(n)}$ and $U_m^{(n)} \subseteq B_{1/n}\left(x_m^{(n)}\right)$.

Now let $\mathcal{F}_n \subseteq \Di{X,A}$ be the set of diagrams in which each point (outside $A$) is equal to $x_m^{(n)}$ for some $m$. For each $m$, the multiplicity of $x_m^{(n)}$ is in $\NN \cup \{\aleph_0\}$, leading to $\left| \NN \cup \{\aleph_0\} \right| = \aleph_0$ choices. Thus the cardinality of $\mathcal{F}_n$ is $\aleph_0^{k_n} = \aleph_0$. Now let $\mathcal{F} = \bigcup_{n=1}^\infty \mathcal{F}_n$, so that $\mathcal{F}$ is a countable union of countable sets and thus countable. We claim that $\mathcal{F}$ is dense in $\Di{X,A}$.

Indeed, let $\sigma \in \Di{X,A}$, and let $\varepsilon > 0$. Pick $n \in \NN$ such that $n > d_\infty(\sigma,\sigma_\varnothing)$ and $\frac{1}{n} \leq \varepsilon$. Let $\tau \in \mathcal{F}_n \subseteq \mathcal{F}$ be such that the multiplicity of $x_m^{(n)}$ in $\tau$ is equal to the number of points of $\sigma$ lying inside $U_m^{(n)}$. Then $d_\infty(\sigma,\tau) \leq \frac{1}{n} \leq \varepsilon$: indeed, there is a bijection $\sigma \to \tau$ sending any point in $U_m^{(n)}$ to $x_m^{(n)}$ and any point in $B_{1/n}(A)$ to a point in $A$ (this leaves no points of $\sigma$ unmatched as $d_\infty(\sigma,\sigma_\varnothing) < n$ implies that all points of $\sigma$ lie in $B_n(A)$). Therefore, $\mathcal{F}$ is dense in $\Di{X,A}$, as claimed.
\end{proof}


\begin{lem}
If there exists $D > \delta > 0$ such that $B_D(A) \setminus B_\delta(A)$ is not totally bounded, then $\Di{X,A}$ is not separable.
\end{lem}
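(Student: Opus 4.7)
The plan is to use the negation of total boundedness to extract a uniformly separated sequence and use it to build uncountably many pairwise well-separated diagrams, contradicting separability.

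First, assume $B_D(A) \setminus B_\delta(A)$ is not totally bounded, so there exists $\eta > 0$ with no finite $\eta$-net. A standard greedy argument then yields a sequence $(x_n)_{n=0}^\infty \subseteq B_D(A) \setminus B_\delta(A)$ with $d(x_n,x_m) \geq \eta$ for all $n \neq m$. Set $c := \min(\eta,\delta) > 0$. For each subset $S \subseteq \NN$, consider the multiset $\widetilde\sigma_S = \mset{x_n : n \in S}$ and let $\sigma_S \in \DD(X,A)$ denote its equivalence class. Since every $x_n$ satisfies $d(x_n,A) < D$, we have $d_\infty(\sigma_S,\sigma_\varnothing) \leq D < \infty$, so $\sigma_S \in \Di{X,A}$.

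The main step is to show that $d_\infty(\sigma_S,\sigma_T) \geq c$ whenever $S \neq T$. Suppose, without loss of generality, that there exists $n_0 \in S \setminus T$, and let $\widetilde\alpha, \widetilde\beta \in \widetilde\DD(A)$ and $\phi\colon \widetilde\sigma_S \sqcup \widetilde\alpha \to \widetilde\sigma_T \sqcup \widetilde\beta$ be any bijection. The image $\phi(x_{n_0})$ lies either in $\widetilde\beta \subseteq A$, in which case $d(x_{n_0},\phi(x_{n_0})) \geq d(x_{n_0},A) > \delta$, or equals $x_m$ for some $m \in T$; since $n_0 \notin T$, we must have $m \neq n_0$, giving $d(x_{n_0},x_m) \geq \eta$. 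Either way $\sup d(x,\phi(x)) \geq c$, and taking the infimum over $\widetilde\alpha,\widetilde\beta,\phi$ yields $d_\infty(\sigma_S,\sigma_T) \geq c$.

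Finally, the family $\{\sigma_S : S \subseteq \NN\}$ has cardinality $2^{\aleph_0}$ and is pairwise $c$-separated. If $\Di{X,A}$ were separable, any dense countable subset $\mathcal{G}$ would contain, for each $S$, some $\tau_S$ with $d_\infty(\sigma_S,\tau_S) < c/2$; the map $S \mapsto \tau_S$ would then be injective (by the triangle inequality), contradicting countability of $\mathcal{G}$. Hence $\Di{X,A}$ is not separable.

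The only delicate point is the bijection argument: one must allow arbitrary infinite additions of points of $A$ to both diagrams, but the argument above handles this uniformly by tracking the single point $x_{n_0}$, whose image is forced to incur distance at least $c$ regardless of how the rest of the bijection is chosen.
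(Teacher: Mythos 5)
Your proof is correct, and it takes a genuinely different (though closely related) route from the paper's. Both arguments begin identically, extracting an infinite $\eta$-separated sequence $(x_n)$ in $B_D(A)\setminus B_\delta(A)$, and both rest on the same key observation: any bijection must move a tracked point $x_{n_0}$ either into $A$ (cost $\geq\delta$) or onto a different $x_m$ (cost $\geq\eta$). Where you diverge is in how non-separability is then derived. You build, once and for all, the uncountable family $\{\sigma_S : S\subseteq\NN\}$ and show it is uniformly $c$-separated, so no countable set can be dense; this is the standard ``uncountable uniformly discrete subset'' argument, and it yields the slightly stronger conclusion that $\Di{X,A}$ contains $2^{\aleph_0}$ pairwise $c$-separated diagrams. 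The paper instead diagonalizes against an arbitrary candidate countable dense set $\{\sigma_i\}$, constructing a single diagram $\tau=\mset{x_i : d(x,x_i)\geq\varepsilon/2 \text{ for all } x\in\sigma_i}$ that stays at distance $\geq\varepsilon/2$ from every $\sigma_i$; that version avoids invoking an uncountable family but requires a short case analysis to handle the dependence of $\tau$ on the $\sigma_i$. One trivial slip in your write-up: since $x_{n_0}\notin B_\delta(A)$ you only get $d(x_{n_0},A)\geq\delta$, not $>\delta$, but $\geq\delta\geq c$ is all you use, so nothing breaks.
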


\begin{proof}
Since $B_D(A) \setminus B_\delta(A)$ is not totally bounded, there exists $\varepsilon > 0$ such that $B_D(A) \setminus B_\delta(A)$ cannot be covered by finitely many $\varepsilon$-balls; without loss of generality, assume that $\varepsilon \leq \delta$. Now let $\mathcal{X} \subseteq B_D(A) \setminus B_\delta(A)$ be a maximal subset such that $d(x,y) \geq \varepsilon$ for all $x,y \in \mathcal{X}$ with $x \neq y$ (such an $\mathcal{X}$ exists by Zorn's Lemma). Then $\varepsilon$-balls around the points of $\mathcal{X}$ cover $B_D(A) \setminus B_\delta(A)$, implying that $\mathcal{X}$ is infinite. In particular, there exists a collection $\{ x_i : i \in \NN \}$ of points in $B_D(A) \setminus B_\delta(A)$ such that $d(x_i,x_j) \geq \varepsilon$ whenever $i \neq j$.

Let $\{ \sigma_i : i \in \NN \}$ be a countable subset of $\Di{X,A}$. Define a diagram $\tau$ by
\[
\tau = \{\!\{ x_i : d(x,x_i) \geq \varepsilon/2 \text{ for all } x \in \sigma_i \}\!\};
\]
since $d(x_i,A) < D$ for all $i$ we have $\tau \in \Di{X,A}$. Note that since $d(x_i,A) \geq \delta > \varepsilon/2$ for each $i$, the definition of $\tau$ is independent of the choice of representative of $\sigma_i$ (i.e.\ adding/removing points in $A$ to/from $\sigma_i$ does not change $\tau$). We now claim that $d_\infty(\tau,\sigma_i) \geq \varepsilon/2$ for each $i$, so that $\tau$ is not in the closure of $\{ \sigma_i : i \in \NN \}$ in $\Di{X,A}$; this will imply that $\{ \sigma_i : i \in \NN \}$ is not dense in $\Di{X,A}$, as required.

To prove our claim, let $i \in \NN$ and let $\gamma\colon \tau \to \sigma_i$ be a bijection.
\begin{description}
\item[Case 1] Suppose that $d(x,x_i) \geq \varepsilon/2$ for all $x \in \sigma_i$, and so $x_i \in \tau$. We then have $d(x_i,x) \geq \varepsilon/2$ for all $x \in \sigma_i$ with $x \notin A$, and $d(x_i,A) \geq \delta \geq \varepsilon > \varepsilon/2$ since $x_i \notin B_\delta(A)$. Thus we must have $d(x_i,\gamma(x_i)) \geq \varepsilon/2$.
\item[Case 2] Suppose that $d(x,x_i) < \varepsilon/2$ for some $x \in \sigma_i$, and so $x_i \notin \tau$. Then every point of $\tau$ is either in $A$, or equal to $x_j$ for some $j \neq i$. But we have $d(x,A) \geq d(x_i,A) - d(x,x_i) > \delta - \varepsilon/2 \geq \varepsilon/2$ since $\delta \geq \varepsilon$, and $d(x,x_j) \geq d(x_i,x_j) - d(x,x_i) > \varepsilon-\varepsilon/2 = \varepsilon/2$. Therefore, we must have $d(\gamma^{-1}(x),x) > \varepsilon/2$.
\end{description}
It follows that in either case there exists $y \in \tau$ such that $d(y,\gamma(y)) \geq \varepsilon/2$, and in particular $\sup_{y \in \tau} d(y,\gamma(y)) \geq \varepsilon/2$. Taking the infimum over all bijections $\gamma\colon \tau \to \sigma_i$ gives $d_\infty(\tau,\sigma_i) \geq \varepsilon/2$, as claimed.
\end{proof}

\begin{example} \label{ex:usual-not-separable}
Consider $\Di{\R^2_{\geq 0},\Delta}$, the usual space of persistence diagrams. Then for any $D > \delta > 0$, the ``strip'' $B_D(\Delta) \setminus B_\delta(\Delta)$ is unbounded and, hence, not totally bounded. It follows that $\Di{\R^2_{\geq 0},\Delta}$ is not separable.
\end{example}

\begin{example}
Consider $\Di{[0,\infty),\{0\}}$. Then, for any $D > \delta > 0$, the set $B_D(\{0\}) \setminus B_\delta(\{0\})$ is just the interval $[\delta,D)$, which is clearly totally bounded. Thus, $\Di{[0,\infty),\{0\}}$ is separable.
\end{example}


\section{Geodesicity: Proof of Theorem \ref{thm:Di-geodesic}} \label{s:geodesicity}

In the proof of Theorem~\ref{thm:Di-geodesic} we use the concepts of ultrafilters and ultralimits, defined as follows.

\begin{defn}
An \emph{ultrafilter} (\emph{on $\NN$}) is a non-zero function $\omega\colon \mathcal{P}(\NN) \to \{0,1\}$, where $\mathcal{P}(\NN)$ denotes the power set of $\NN$, that is \emph{finitely additive}---that is, $\omega(A) + \omega(B) = \omega(A \cup B)$ for any disjoint subsets $A,B \subseteq \NN$. A \emph{principal ultrafilter} is an indicator function $\mathbbm{1}_n$ (for some $n \in \NN$), defined by setting $\mathbbm{1}_n(A) = 1$ if and only if $n \in A$. Given a sequence $(x_n)_{n=1}^\infty$ in a metric space $X$ and an ultrafilter $\omega$, an \emph{ultralimit} $\lim_\omega x_n$ of $(x_n)$ with respect to $\omega$ is a point $x \in X$ such that $\omega(\{n \in \NN : d(x,x_n) < \varepsilon \}) = 1$ for every $\varepsilon > 0$.
\end{defn}

It is easy to check, using finite additivity, that an ultrafilter $\omega$ is non-principal if and only if $\omega(F) = 0$ for every finite subset $F \subset \NN$. The existence of a non-principal ultrafilter is guaranteed by the axiom of choice \cite[\S3.1]{KL}. We refer the interested reader to \cite{Goldbring} for an introduction to the theory of ultrafilters and their use throughout mathematics.

In any metric space $X$, an ultralimit of a sequence $(x_n)$ is always unique (when it exists): indeed, if $x \neq y$ were two different ultralimits of $(x_n)$ then $\omega$ would send both $\{n \in \NN : d(x,x_n) < \varepsilon \}$ and $\{n \in \NN : d(y,x_n) < \varepsilon \}$ to $1$, where $\varepsilon = \frac{1}{2}d(x,y)$, contradicting finite additivity of $\omega$. Moreover, if $X$ is compact then such an ultralimit will always exist \cite[\S3.1]{KL}. If $\omega$ is a non-principal ultrafilter, then clearly $\lim_\omega x_n$ will be an accumulation point of the sequence $(x_n)$.

Our main use of ultrafilters and ultralimits is based on their ability to pick out accumulation points of sequences \emph{consistently}. For instance, roughly speaking, in the proof of Proposition~\ref{prop:Di-geodesic} below we aim to construct a bijection $\widetilde\gamma\colon \widetilde\sigma \to \widetilde\tau$ as a `limit' of a sequence of bijections $\widetilde\gamma_n\colon \widetilde\sigma \to \widetilde\tau$. Given a point $x \in \widetilde\sigma$ it may turn out, for example, that there exists a subset $Y_x \subseteq \widetilde\tau$ such that $\{ n \in \NN : \widetilde\gamma_n(x) = y \}$ is infinite for all $y \in Y_x$; analogously, given $y \in \widetilde\tau$, there may exist $X_y \subseteq \widetilde\sigma$ such that $\{ n \in \NN : \widetilde\gamma_n^{-1}(y) = x \}$ is infinite for all $x \in X_y$. It is then unclear which (if any) point of $Y_x$ (respectively $X_y$) should be declared to be $\widetilde\gamma(x)$ (respectively $\widetilde\gamma^{-1}(y)$), doing this procedure for all $x \in \widetilde\sigma$ and $y \in \widetilde\tau$ in a consistent way: that is, if $y \in Y_x$ is declared to be $\widetilde\gamma(x)$ then $x \in X_y$ needs to be declared to be $\widetilde\gamma^{-1}(y)$, and vice versa. Using ultralimits with respect to a non-principal ultrafilter $\omega$ allows us to circumvent such an issue---in particular, given $x \in X$ at most one of the subsets $\{ n \in \NN : \widetilde\gamma_n(x) = y \}$ for $y \in Y_x$ is sent to $1$ by $\omega$, allowing us to pick out the `correct' $y \in Y_x$ unambiguously.

The main ingredient in the proof of Theorem~\ref{thm:Di-geodesic} is the following result.

\begin{prop} \label{prop:Di-geodesic}
Let $X$ be a proper metric space and let $\sigma,\tau \in \Di{X,A}$. Then there exist diagrams $\sigma',\tau' \in \Di{X,A}$ such that $d_\infty(\sigma,\sigma') = d_\infty(\tau,\tau') = 0$ and a bijection $\gamma\colon \sigma' \to \tau'$ such that $d_\infty(\sigma,\tau) = \sup \{ d(x,\gamma(x)) : x \in \sigma' \}$.
\end{prop}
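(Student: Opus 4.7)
My plan is to extract a limiting bijection from a sequence of near-optimal bijections, exploiting properness of $X$ via an ultrafilter argument.

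\emph{Setup and ultrafilter limits.} Set $D := d_\infty(\sigma,\tau)$. By the infimum definition of $d_\infty$, for each $n \geq 1$ there exist representatives $\widetilde\sigma_n$ of $\sigma$ and $\widetilde\tau_n$ of $\tau$ together with a bijection $\widetilde\gamma_n \colon \widetilde\sigma_n \to \widetilde\tau_n$ satisfying $\sup_{x} d(x,\widetilde\gamma_n(x)) \leq D + 1/n$. I would first enlarge both sides to common representatives $\widetilde\sigma^* \sim_A \widetilde\sigma$ and $\widetilde\tau^* \sim_A \widetilde\tau$ containing infinitely many copies of each point of $A$, so that every $\widetilde\gamma_n$ extends to a bijection $\widetilde\sigma^* \to \widetilde\tau^*$ (matching the leftover $A$-copies identically) with the same sup bound. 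Fixing a non-principal ultrafilter $\omega$ on $\NN$, for each $x \in \widetilde\sigma^*$ the sequence $(\widetilde\gamma_n(x))_{n \geq 1}$ lies in the closed ball $\overline{B_{D+1}(x)}$, which is compact by properness of $X$. Hence the ultralimit $\widetilde\gamma(x) := \lim_\omega \widetilde\gamma_n(x) \in X$ is well defined and, by continuity of the metric, satisfies $d(x,\widetilde\gamma(x)) \leq D$. Symmetrically, define $\widetilde\delta(y) := \lim_\omega \widetilde\gamma_n^{-1}(y)$ for $y \in \widetilde\tau^*$, with $d(y,\widetilde\delta(y)) \leq D$.

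\emph{Construction of $\sigma'$, $\tau'$, and $\gamma$.} I would split the non-$A$ points of $\widetilde\sigma^*$ and $\widetilde\tau^*$ according to whether their ultralimit image lies in $A$. Build $\widetilde\sigma'$ from (i) those $x \in \widetilde\sigma^* \setminus A$ with $\widetilde\gamma(x) \notin A$, (ii) the new points $\widetilde\delta(y)$ for those $y \in \widetilde\tau^* \setminus A$ with $\widetilde\delta(y) \notin A$ that are not already accounted for under (i), and (iii) sufficiently many copies of points of $A$; build $\widetilde\tau'$ symmetrically. Define $\gamma$ to send $x \mapsto \widetilde\gamma(x)$ on (i), $\widetilde\delta(y) \mapsto y$ on (ii), and as a measure-preserving pairing on the $A$-copies. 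Every paired element then satisfies $d(x,\gamma(x)) \leq D$, so $\sup_{x} d(x,\gamma(x)) \leq D$; combined with $d_\infty(\sigma,\tau) = D$ and the triangle inequality for the pseudometric $d_\infty$, this yields the required equality.

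\emph{Main obstacle.} The delicate step is verifying $d_\infty(\sigma,[\widetilde\sigma']) = 0 = d_\infty(\tau,[\widetilde\tau'])$. The multiset $\widetilde\sigma'$ differs from $\widetilde\sigma^*$ by removing the points $x$ with $\widetilde\gamma(x) \in A$ (all of persistence $d(x,A) \leq D$) and adjoining the new limit points $\widetilde\delta(y)$, each of which is, by the ultralimit construction, an accumulation point of a sequence actually lying in $\widetilde\sigma^*$. For every $\epsilon > 0$, these modifications must be realized by an $\epsilon$-cheap bijection between suitable representatives via a shift-along-an-accumulating-sequence argument, in the spirit of the bijections $\gamma_\varepsilon$ constructed in the proof of the second lemma of Section~\ref{s:metrizability}. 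Assembling such shifts coherently across potentially infinitely many simultaneous modifications, while keeping the combined cost within $\epsilon$, together with ensuring that the case analysis in the construction of $\gamma$ does not double-count pairs where both $\widetilde\gamma(x) \notin A$ and $\widetilde\delta(\widetilde\gamma(x)) \notin A$, constitutes the main technical challenge of the proof.
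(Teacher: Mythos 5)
Your overall strategy (ultralimits of a sequence of near-optimal bijections, using properness and a non-principal ultrafilter) is the same as the paper's, but your concrete construction contains a genuine error and leaves the two hardest steps unresolved. The error: you build $\widetilde\sigma'$ by \emph{discarding} every $x \in \widetilde\sigma^* \setminus A$ whose ultralimit $\widetilde\gamma(x)$ lands in $A$. Such an $x$ only satisfies $d(x,A) \leq D = d_\infty(\sigma,\tau)$, not $d(x,A)=0$, so deleting it is not a distance-zero modification. Concretely, take $\sigma = \mset{x_0}$ for a single point $x_0$ with $d(x_0,A)=D>0$ and $\tau = \sigma_\varnothing$: every $\widetilde\gamma_n$ sends $x_0$ into $A$, so $\widetilde\gamma(x_0) \in A$, your $\sigma'$ becomes $\sigma_\varnothing$, and $d_\infty(\sigma,\sigma') = D \neq 0$. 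No ``shift-along-an-accumulating-sequence'' argument can repair this, since there need be no other points of $\widetilde\sigma^*$ accumulating at $x_0$. The paper's construction never deletes points: it sets $\widetilde\sigma' = \widetilde\sigma \sqcup \mset{x_y^\omega : y \in \widetilde\tau \setminus \widetilde\tau_c}$ and $\widetilde\tau' = \widetilde\tau \sqcup \mset{y_x^\omega : x \in \widetilde\sigma \setminus \widetilde\sigma_c}$, so a point $x$ whose limit lies in $A$ is simply matched to the freshly adjoined copy $y_x^\omega \in A$ (adjoining $A$-points is free).

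The two issues you flag as ``main technical challenges'' are not minor bookkeeping; they are where the proof actually lives, and your setup makes them harder than necessary. For bijectivity, the paper does not work with the ultralimit as a point of $X$: it first asks whether $\omega$ selects a single \emph{element} $y_x$ of the multiset $\widetilde\tau$ (the ``convergent'' case), pairing $x$ with $y_x$ inside the original multisets and using the consistency $x = x_y \iff y = y_x$ to see this restricts to a bijection $\widetilde\sigma_c \to \widetilde\tau_c$; only for non-convergent points is a fresh copy of the ultralimit adjoined, so no double-counting can occur. Your point-of-$X$ formulation loses the multiplicity information needed to make $\gamma$ a bijection. For the verification $d_\infty(\sigma,\sigma')=0$, the essential (and missing) ingredient is that for each non-convergent $y$ the submultiset $\mset{x \in \widetilde\sigma : d(x,x_y^\omega) < \varepsilon}$ is \emph{infinite} for every $\varepsilon>0$ --- proved in the paper by finite additivity of $\omega$, and true precisely because $y$ is non-convergent --- followed by a disjointification of the resulting accumulating sequences across all such $y$. ``Being an accumulation point of a sequence lying in $\widetilde\sigma^*$'' is not enough: if the sequence $\widetilde\gamma_n^{-1}(y)$ visited only finitely many distinct elements of the multiset, the shift bijection would not exist.
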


\begin{proof}
By the definition of the pseudometric $d_\infty$, there is a sequence of bijections $(\gamma_n\colon \sigma \to \tau)_{n=1}^\infty$ such that $d(x,\gamma_n(x)) \leq d_\infty(\sigma,\tau) + \frac{1}{n}$ for all $n$ and all $x \in \sigma$. We can lift each $\gamma_n$ to a bijection $\widetilde\gamma_n \colon \widetilde\sigma_n \to \widetilde\tau_n$ for some countable multisets $\widetilde\sigma_n$ and $\widetilde\tau_n$ in $X$ representing diagrams $\sigma$ and $\tau$, respectively. Now let $\widetilde\sigma$ (respectively $\widetilde\tau$) be the multiset in $X$ consisting of the multiset $\{\!\{ x \in \sigma : x \notin A \}\!\}$ (respectively $\{\!\{ x \in \tau : x \notin A \}\!\}$) together with a countably infinite number of copies of each point $a \in A$ appearing in $\widetilde\sigma_n \cup \widetilde\tau_n$ for some $n \geq 1$. It is easy to see that $\widetilde\sigma$ and $\widetilde\tau$ are countable. Moreover, since any point $a \in A$ appears in $\widetilde\sigma$ and in $\widetilde\tau$ with the same multiplicity (zero or infinite), with $a$ having infinite multiplicity whenever it appears in $\widetilde\sigma_n$ or in $\widetilde\tau_n$, it follows that the bijection $\widetilde\gamma_n \colon \widetilde\sigma_n \to \widetilde\tau_n$ can be extended to a bijection $\widetilde\gamma_n \colon \widetilde\sigma \to \widetilde\tau$ such that $\widetilde\gamma_n(x) = x$ in $X$ whenever $x \notin \widetilde\sigma_n$. In particular, such an extended bijection $\widetilde\gamma_n\colon \widetilde\sigma \to \widetilde\tau$ satisfies $d(x,\widetilde\gamma_n(x)) \leq d_\infty(\sigma,\tau)+\frac{1}{n}$ for all $x \in \widetilde\sigma$.

Now let $\omega\colon \mathcal{P}(\NN) \to \{0,1\}$ be a non-principal ultrafilter. We call a point $x \in \widetilde\sigma$ \emph{convergent} if there exists $y_x \in \widetilde\tau$ such that $\widetilde\gamma_n(x) = y_x$ $\omega$-almost surely (that is, $\omega$ sends $\{ n \in \NN : \widetilde\gamma_n(x) = y_x \}$ to $1$), and we let $\widetilde\sigma_c \subseteq \widetilde\sigma$ be the multiset of convergent points in $\widetilde\sigma$. Similarly, we call a point $y \in \widetilde\tau$ \emph{convergent} if $\widetilde\gamma_n^{-1}(y) = x_y$ $\omega$-almost surely for some $x_y \in \widetilde\sigma$, and we let $\widetilde\tau_c \subseteq \widetilde\tau$ be the multiset of convergent points in $\widetilde\tau$. Note that given any $x \in \widetilde\sigma$ and $y \in \widetilde\tau$, we have $x = x_y$ if and only if $y = y_x$.

Given  $x \in \widetilde\sigma \setminus \widetilde\sigma_c$, note that $B_{d_\infty(\sigma,\tau)+1}(x)$ is compact (as $X$ is proper), and so the sequence $(\widetilde\gamma_n(x))_{n=1}^\infty \subseteq B_{d_\infty(\sigma,\tau)+1}(x)$ has a (unique) ultralimit $y_x^\omega := \lim_\omega \widetilde\gamma_n(x)$ in $X$. Similarly, given $y \in \widetilde\tau \setminus \widetilde\tau_c$ we can define $x_y^\omega := \lim_\omega \widetilde\gamma_n^{-1}(y)$; see Figure~\ref{fig:geodesic-<=}. Now let $\widetilde\sigma' := \widetilde\sigma \sqcup \{\!\{ x_y^\omega : y \in \widetilde\tau \setminus \widetilde\tau_c \}\!\}$ and $\widetilde\tau' := \widetilde\tau \sqcup \{\!\{ y_x^\omega : x \in \widetilde\sigma \setminus \widetilde\sigma_c \}\!\}$. We define a mapping $\widetilde\gamma\colon \widetilde\sigma' \to \widetilde\tau'$ by setting $\widetilde\gamma(x) = y_x$ for $x \in \widetilde\sigma_c$, $\widetilde\gamma(x) = y_x^\omega$ for $x \in \widetilde\sigma \setminus \widetilde\sigma_c$, and $\widetilde\gamma(x_y^\omega) = y$ for $y \in \widetilde\tau \setminus \widetilde\tau_c$. Finally, let $\sigma',\tau' \in \Di{X,A}$ be the diagrams represented by $\widetilde\sigma',\widetilde\tau'$ (respectively), and let $\gamma\colon \sigma' \to \tau'$ be the mapping induced by $\widetilde\gamma$.

Note that $\widetilde\gamma$ (and so $\gamma$) is a well-defined bijection by construction. Moreover, for any $x \in \widetilde\sigma'$ we have $d(x,\widetilde\gamma(x)) \leq d_\infty(\sigma,\tau)$: indeed, we have
\begin{gather*}
d(x,\widetilde\gamma(x)) = \lim_\omega d(x,\widetilde\gamma_n(x)) \leq \limsup_{n \to \infty} d(x,\widetilde\gamma_n(x)) \leq d_\infty(\sigma,\tau)
\shortintertext{if $x \in \widetilde\sigma$, and}
d(x,\widetilde\gamma(x)) = d(y,x_y^\omega) = \lim_\omega d(y,\widetilde\gamma_n^{-1}(y)) \leq \limsup_{n \to \infty} d(y,\widetilde\gamma_n^{-1}(y)) \leq d_\infty(\sigma,\tau)
\end{gather*}
if $x = x_y^\omega$ for some $y \in \widetilde\tau \setminus \widetilde\tau_c$; in both cases, the first inequality follows from the fact that given a sequence $(r_n)_{n=1}^\infty$ in the interval $[0,d_\infty(\sigma,\tau)+1]$, the ultralimit $\lim_\omega r_n$ is an accumulation point of~$(r_n)$. This implies that $\sup \{ d(x,\gamma(x)) : x \in \sigma' \} = \sup \{ d(x,\widetilde\gamma(x)) : x \in \widetilde\sigma' \} \leq d_\infty(\sigma,\tau)$. Moreover, by the definition of $d_\infty$ we have $\sup \{ d(x,\gamma(x)) : x \in \sigma' \} \geq d_\infty(\sigma',\tau')$. Since $|d_\infty(\sigma,\tau) - d_\infty(\sigma',\tau')| \leq d_\infty(\sigma,\sigma') + d_\infty(\tau,\tau')$, it remains to show that $d_\infty(\sigma,\sigma') = d_\infty(\tau,\tau') = 0$.

We now claim that $d_\infty(\sigma,\sigma') = 0$ (the proof that $d_\infty(\tau,\tau') = 0$ is analogous). Indeed, let $\varepsilon > 0$, and let $N \in \NN$ be such that $\frac{2}{N} \leq \varepsilon$. We will find a bijection $\widetilde\delta_N \colon \widetilde\sigma \to \widetilde\sigma'$ such that $d(x,\widetilde\delta_N(x)) \leq \frac{2}{N} \leq \varepsilon$ for any $x \in \widetilde\sigma$.

Let $y \in \widetilde\tau \setminus \widetilde\tau_c$. We claim that there exists a sequence $(x_y^{(n)})_{n=1}^\infty$ of points in $\widetilde\sigma$ such that $x_y^{(n)} \to x_y^\omega$ as $n \to \infty$ and $x_y^{(n)} \neq x_y^{(m)}$ (as elements of $\widetilde\sigma$) when $n \neq m$. Indeed, in order to show this it is enough to show that for each $\varepsilon > 0$, the submultiset $X_\varepsilon = \{\!\{ x \in \widetilde\sigma : d(x,x_y^\omega) < \varepsilon \}\!\}$ of $\widetilde\sigma$ is infinite. Suppose for contradiction that this is not the case, and so $X_\varepsilon = \{\!\{ x_1,\ldots,x_k \}\!\}$ for some $\varepsilon > 0$, and for $1 \leq i \leq k$ let $A_i = \{ n \in \NN : \widetilde\gamma_n^{-1}(y) = x_i \}$. Then $\omega(A_i) = 0$ for each $i$ since $y \notin \widetilde\tau_c$; however, we have $\omega(A_1 \cup \cdots \cup A_k) = \omega(\{ n \in \NN : d(\widetilde\gamma_n^{-1}(y),x_y^\omega) < \varepsilon \}) = 1$ by the definition of $x_y^\omega$. This contradicts the fact that $\omega$ is finitely additive; thus $X_\varepsilon$ must be infinite, and so a sequence $(x_y^{(n)})$ with the claimed properties does exist.

Moreover, we can pass each of the sequences $(x_y^{(n)})_{n=1}^\infty$ for $y \in \widetilde\tau \setminus \widetilde\tau_c$ to a subsequence so that $x_y^{(n)} \neq x_{y'}^{(m)}$ (as elements of $\widetilde\sigma$) whenever $(y,n) \neq (y',m)$: for instance, we may pick an enumeration $\widetilde\tau \setminus \widetilde\tau_c = \{\!\{ y_1,y_2,\ldots \}\!\}$ and inductively build the collection of tuples $\{\!\{ (x_{y_i}^{(1)},\ldots,x_{y_i}^{(n)}) : 1 \leq i \leq n \}\!\}$ with no repeating elements. Finally, after passing each of the sequences $(x_y^{(n)})_{n=1}^\infty$ to a subsequence once again, if necessary, we may assume that $d(x_y^{(n)},x_y^\omega) \leq \frac{1}{n}$ for all $y$ and $n$.

We now define the bijection $\widetilde\delta_N\colon \widetilde\sigma \to \widetilde\sigma'$ as follows: given $x \in \widetilde\sigma$,
\begin{itemize}
\item if $x = x_y^{(N)}$ for some $y \in \widetilde\tau \setminus \widetilde\tau_c$, let $\widetilde\delta_N(x) = x_y^\omega$;
\item if $x = x_y^{(n)}$ for some $y \in \widetilde\tau \setminus \widetilde\tau_c$ and some $n \geq N+1$, let $\widetilde\delta_N(x) = x_y^{(n-1)}$;
\item otherwise, let $\widetilde\delta_N(x) = x$;
\end{itemize}
see Figure~\ref{fig:geodesic-<=}, solid arrows. It is easy to see that $\widetilde\delta_N$ is a bijection. Moreover, we have $d(x,\widetilde\delta_N(x)) \leq \frac{2}{N}$ for all $x \in \widetilde\sigma$: indeed, we have $d(x_y^{(N)},x_y^\omega) \leq \frac{1}{N}$ and
\[
d(x_y^{(n)},x_y^{(n-1)}) \leq d(x_y^{(n)},x_y^\omega) + d(x_y^{(n-1)},x_y^\omega) \leq \frac{1}{n} + \frac{1}{n-1} \leq \frac{2}{N}
\]
for $n \geq N+1$. It follows that $d_\infty(\sigma,\sigma') \leq \sup \{ d(x,\widetilde\delta_N(x)) : x \in \widetilde\sigma \} \leq \varepsilon$, as required.
\end{proof}

\begin{figure}[ht]
    \centering
\begin{tikzpicture}
\draw [dashed,green!70!black!50,fill=green!10] (4,0) circle (2.15);
\node [green!70!black!80] at (4,-1) {$B_{1/N}(x_y^\omega)$};
\draw [thick,red,->] (2,5) to[out=0,in=-45] (2.5,5.3) to[out=135,in=60] (2.04,5.09);
\draw [thick,red,->] (3,4.2) to[out=0,in=-45] (3.5,4.5) to[out=135,in=60] (3.04,4.29);
\draw [thick,red,->] (3.5,3.5) to[out=0,in=-45] (4,3.8) to[out=135,in=60] (3.54,3.59);
\draw [thick,red,->] (4,1.8) to[out=180,in=180] (3.9,0);
\draw [thick,red,->] (4,1.2) to[out=120,in=-90] (3.85,1.5) to[out=90,in=-120] (3.94,1.71);
\draw [thick,red,->] (3.85,0.9) to[out=90,in=-120] (3.94,1.11);
\draw [thick,red,densely dotted] (3.85,0.9) to[out=-90,in=120] (3.94,0.69);
\draw [thick,blue,dash pattern={on 8pt off 2pt},->] (0,0) to[out=180,in=0] node [pos=0.6,above,blue!50] {$\leq d+1$} (-5.9,0);
\draw [thick,blue,dash pattern={on 8pt off 2pt},->] (0,0) to[out=160,in=0] node [pos=0.6,above,rotate=-10,yshift=-2pt,blue!50] {$\leq d+\frac{1}{2}$} (-4.9,1);
\draw [thick,blue,dash pattern={on 8pt off 2pt},->] (0,0) to[out=140,in=0] node [pos=0.6,above,rotate=-25,yshift=-3pt,blue!50] {$\leq d+\frac{1}{3}$} (-3.9,2);
\draw [thick,blue,dash pattern={on 8pt off 2pt},->] (0,0) to[out=90,in=-135] node [pos=0.8,left] {$\widetilde\gamma_{m_1}$} (1.93,4.93);
\draw [thick,blue,dash pattern={on 8pt off 2pt},->] (0,0) to[out=80,in=-150] node [pos=0.8,left,yshift=2pt] {$\widetilde\gamma_{m_2}$} (2.92,4.15);
\draw [thick,blue,dash pattern={on 8pt off 2pt},->] (0,0) to[out=70,in=-145] node [pos=0.8,left,yshift=3pt] {$\widetilde\gamma_{m_3}$} (3.43,3.43);
\draw [thick,blue,dash pattern={on 8pt off 2pt},->] (0,0) to[out=30,in=180] node [pos=0.7,above,rotate=20,yshift=-2pt] {$\widetilde\gamma_{m_N}$} (3.9,1.8);
\draw [thick,blue,dash pattern={on 8pt off 2pt},->] (0,0) to[out=20,in=180] node [pos=0.7,below,rotate=15,yshift=2pt] {$\widetilde\gamma_{m_{N+1}}$} (3.9,1.2);
\draw [fill=black!50] (0,0) circle (2pt) node [below] {$y$};
\draw [fill=green!70!black] (-6,0) circle (2pt) node [left] {$\widetilde\gamma_1(y)$};
\draw [fill=green!70!black] (-5,1) circle (2pt) node [left] {$\widetilde\gamma_2(y)$};
\draw [fill=green!70!black] (-4,2) circle (2pt) node [left] {$\widetilde\gamma_3(y)$};
\draw [fill=green!70!black] (2,5) circle (2pt) node [above,xshift=-4pt] {$x_y^{(1)}$};
\draw [fill=green!70!black] (3,4.2) circle (2pt) node [above,xshift=-4pt] {$x_y^{(2)}$};
\draw [fill=green!70!black] (3.5,3.5) circle (2pt) node [above,xshift=-4pt] {$x_y^{(3)}$};
\draw [green!70!black,ultra thick,loosely dotted] (3.65,3.3) to[out=-60,in=90] (4,2.1) (4,1) -- (4,0.5);
\draw [fill=green!70!black] (4,1.8) circle (2pt) node [right] {$x_y^{(N)}$};
\draw [fill=green!70!black] (4,1.2) circle (2pt) node [right] {$x_y^{(N+1)}$};
\draw [fill=green!30] (4,0) circle (2pt) node [right] {$x_y^\omega$};
\end{tikzpicture}
    \caption{The proof of Proposition~\ref{prop:Di-geodesic}: construction for a point $y \in \widetilde\tau \setminus \widetilde\tau_c$. Here $d = d_\infty(\sigma,\tau)$, and the solid arrows represent the bijection $\widetilde\delta_N\colon \widetilde\sigma \to \widetilde\sigma'$.}
    \label{fig:geodesic-<=}
\end{figure}
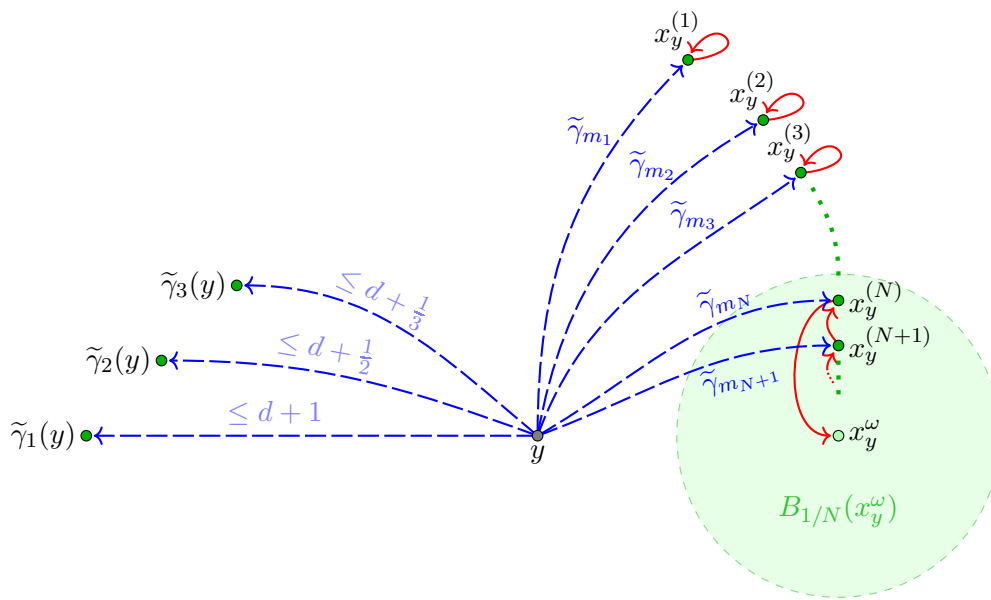

We use the following two lemmas to prove the `if' implication in Theorem~\ref{thm:Di-geodesic}.

\begin{lem} \label{lem:geod-lift}
If $X$ is a proper metric space and $A \subseteq X$ is a non-empty closed subset, then any geodesic in $X/A$ with $A$ as an endpoint can be lifted to a geodesic in $X$.
\end{lem}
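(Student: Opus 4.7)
The plan is to reparametrize the given geodesic $\gamma\colon [0,L] \to X/A$ by arc length so that $\gamma(0) = A$ and $\gamma(L) = x \in X/A$, lift $\gamma$ on $(0,L]$ via the natural bijection $X \setminus A \to (X/A)\setminus \{A\}$, and then extend the lift to $t=0$ by a Cauchy-sequence argument.

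First I would use the unit-speed property to record that $d_{X/A}(A,\gamma(t)) = t$ for every $t\in [0,L]$. Combined with the identity $d_{X/A}(A, y) = d_X(y,A)$ for $y \in X\setminus A$, this forces $\gamma(t) \neq A$ whenever $t>0$, so each such $\gamma(t)$ admits a unique lift $\widetilde\gamma(t) \in X\setminus A$, and this lift automatically satisfies $d_X(\widetilde\gamma(t), A) = t$.

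Next I would invoke the formula $d_{X/A}(y,z) = \min\{d_X(y,z),\, d_X(y,A)+d_X(z,A)\}$ for $y,z \in X\setminus A$. For $s,t\in (0,L]$ the geodesic property gives $d_{X/A}(\gamma(s),\gamma(t)) = |s-t|$, while the second term in the minimum equals $s+t > |s-t|$; hence the minimum is attained by the first term, yielding $d_X(\widetilde\gamma(s),\widetilde\gamma(t)) = |s-t|$. Thus $\widetilde\gamma$ is an isometric embedding of $(0,L]$ into $X$.

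Finally, for any sequence $t_n \to 0^+$ the points $\widetilde\gamma(t_n)$ form a Cauchy sequence in $X$ which, by properness (hence completeness) of $X$, converges to some $a \in X$; since $d_X(\widetilde\gamma(t_n),A) = t_n \to 0$ and $A$ is closed, we have $a\in A$, and the isometric property ensures the limit is independent of the chosen sequence. Setting $\widetilde\gamma(0) = a$ extends the lift to a map $\widetilde\gamma\colon [0,L]\to X$ with $d_X(\widetilde\gamma(s),\widetilde\gamma(t)) = |s-t|$ for all $s,t \in [0,L]$, which is the desired geodesic lift. The only real subtlety is checking that the extended lift still projects to $\gamma$ at $t=0$ and that $a \in A$; both follow cleanly from $d_X(\widetilde\gamma(t),A) = t$ together with closedness of $A$ and completeness of $X$, so I do not anticipate a genuine obstacle beyond setting up the quotient metric carefully.
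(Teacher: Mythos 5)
Your proof is correct and follows essentially the same route as the paper: lift the geodesic away from $A$ via the bijection $X \setminus A \to (X/A)\setminus\{A\}$, observe it is an isometric embedding there, and obtain the missing endpoint in $A$ as a limit using properness. The only (immaterial) difference is that you produce the limit point by applying completeness of $X$ to the Cauchy sequence $\widetilde\gamma(t_n)$ itself and then use closedness of $A$, whereas the paper applies compactness of $A \cap B$ to a sequence of nearby points $a_n \in A$; both are equivalent uses of properness.
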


\begin{proof}
Let $\eta\colon [0,1] \to X/A$ be a constant speed geodesic such that $\eta(0) = A$. Let $x = \eta(1)$ and $r = d(x,A)$; without loss of generality, suppose that $x \notin A$, and therefore $r > 0$ and $\eta(t) \notin A$ for any $t > 0$. Then for any $n \in \Z_{\geq 1}$ there exists $a_n \in A$ such that $d_X(\eta(\frac{1}{n}),a_n) \leq \frac{2r}{n}$. In particular, we have $a_n \in A \cap B$ for each $n$, where $B$ is the closed ball in $X$ with centre $x$ and radius $2r$. As $X$ is proper and $A$ is closed, $A \cap B$ is compact, implying that a subsequence of the sequence $(a_n)$ converges to some $a \in A$. We then define a map $\eta'\colon [0,1] \to X$ by $\eta'(0) = a$ and $\eta'(t) = \eta(t)$ for $t > 0$. For any $t > 0$ we then have
\[
d(a,\eta(t)) \leq \limsup_{n \to \infty} d(a_n,\eta(t)) \leq \limsup_{n \to \infty} d(a_n,\eta(1/n)) + \limsup_{n \to \infty} d(\eta(1/n),\eta(t)) = td(x,A),
\]
implying that $\eta'$ is a geodesic lifting $\eta$.
\end{proof}

\begin{lem} \label{lem:Di-geodesic-<=}
If $X$ is a proper metric space and $A \subseteq X$ is a non-empty closed subset such that any two points $x,y \in X/A$ with either $d(x,y) < d(x,A)$ or $y=A$ can be joined by a geodesic, then the pseudo-metric space $\Di{X,A}$ is geodesic.
\end{lem}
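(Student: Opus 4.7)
The plan is to construct an explicit geodesic in $\Di{X,A}$ between any two diagrams by pointwise interpolation along trajectories in $X/A$. Given $\sigma,\tau \in \Di{X,A}$, I first invoke Proposition~\ref{prop:Di-geodesic} to replace them by diagrams $\sigma',\tau'$ at zero distance from $\sigma,\tau$ together with a bijection $\gamma\colon \sigma' \to \tau'$ realizing $\sup_{x \in \sigma'} d(x,\gamma(x)) = d_\infty(\sigma,\tau)$. Since $d_\infty(\sigma,\sigma') = d_\infty(\tau,\tau') = 0$, any continuous path from $\sigma'$ to $\tau'$ of length $d_\infty(\sigma,\tau)$ can be turned into a geodesic from $\sigma$ to $\tau$ by re-setting its endpoints (the modified path remains continuous because $d_\infty$ is controlled by pseudometric triangle inequalities and its length is unchanged), so it suffices to build a geodesic joining $\sigma'$ to $\tau'$.

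For each pair $(x,\gamma(x))$ with $x \in \sigma'$, I select a trajectory in $X/A$ depending on the geometry of the pair. If $d(x,\gamma(x)) < d(x,A)$, if $d(x,\gamma(x)) < d(\gamma(x),A)$, or if one of $x,\gamma(x)$ equals $A$, then the hypothesis (after reversing if needed) yields a single geodesic $\eta_x\colon [0,1] \to X/A$ from $x$ to $\gamma(x)$ of length $L(x) := d(x,\gamma(x))$, and the pair contributes the point $\eta_x(t)$ to $\sigma_t$ at each time $t$. Otherwise, both $d(x,A)$ and $d(\gamma(x),A)$ lie in $[0,L(x)]$, and the hypothesis (with $y=A$) yields geodesics $x \to A$ and $A \to \gamma(x)$; I route the pair through $A$ by letting an \emph{old} point travel along $x \to A$ at constant speed $L(x)$ during $[0, d(x,A)/L(x)]$ and a \emph{new} point travel along $A \to \gamma(x)$ at the same speed during $[1 - d(\gamma(x),A)/L(x), 1]$. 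The inequality $d(x,A) + d(\gamma(x),A) \geq L(x)$ guarantees that these two intervals overlap in time, during which $\sigma_t$ carries both points for this pair. Augmenting with appropriate copies of $A$ I arrange $\sigma_0 = \sigma'$ and $\sigma_1 = \tau'$.

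The remaining task is to verify that $t \mapsto \sigma_t$ is $d_\infty(\sigma,\tau)$-Lipschitz, which will force its length to equal $d_\infty(\sigma,\tau)$ and thus make it a geodesic. Given $0 \leq s \leq t \leq 1$, I construct a bijection between representatives of $\sigma_s$ and $\sigma_t$ trajectory by trajectory: active points of the same trajectory are paired with each other, while trajectory points whose counterpart has not yet appeared or has already vanished are paired with copies of $A$ on the opposite side. A direct computation along each individual geodesic shows every such match has distance at most $(t-s)L(x) \leq (t-s)d_\infty(\sigma,\tau)$, so taking suprema yields $d_\infty(\sigma_s,\sigma_t) \leq (t-s)d_\infty(\sigma,\tau)$. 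The main subtlety I anticipate is carrying out this global pairing coherently in the presence of infinitely many ``route-through-$A$'' trajectories (each simultaneously contributing an old and a new point during its overlap window) and ensuring that each $\sigma_t$ itself lies in $\Di{X,A}$; both issues should yield to using the explicit multiset representatives of $\sigma',\tau'$ produced in the proof of Proposition~\ref{prop:Di-geodesic}, which already contain infinitely many copies of every relevant point of $A$ that can absorb the slack in the pairings.
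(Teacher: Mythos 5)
Your proposal is correct and follows essentially the same route as the paper: apply Proposition~\ref{prop:Di-geodesic}, interpolate each matched pair along a geodesic supplied by the hypothesis (or through $A$), verify the $d_\infty(\sigma,\tau)$-Lipschitz bound trajectory by trajectory, and reset the endpoints using $d_\infty(\sigma,\sigma')=d_\infty(\tau,\tau')=0$. The only cosmetic difference is that the paper disposes of the pairs with $d(x,\gamma(x))\ge\max\{d(x,A),d(\gamma(x),A)\}$ by re-matching both points to $A$ beforehand (a ``without loss of generality'' adjustment of $\gamma$), whereas you keep the match and route it through $A$ with two time-staggered moving points; the two devices are equivalent.
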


\begin{proof}
Let $\sigma,\tau \in \Di{X,A}$, and let $\sigma',\tau' \in \Di{X,A}$ and $\gamma\colon \sigma' \to \tau'$ be as given by Proposition~\ref{prop:Di-geodesic}. Without loss of generality, assume that $d(x,\gamma(x)) < \max\{d(x,A),d(\gamma(x),A)\}$ for each $x \in \sigma'$ such that $x \neq A$ and $\gamma(x) \neq A$. Then each $x \in \sigma'$ can be joined by a geodesic in $X$ to $\gamma(x) \in \tau'$ (when $x \in A$ or $\gamma(x) \in A$, this follows from Lemma~\ref{lem:geod-lift}); in particular we can pick a constant speed geodesic $\eta_x\colon [0,1] \to X$ from $x$ to $\gamma(x)$. For each $t \in [0,1]$, define a diagram $\rho_t$ as the equivalence class of the multiset $\{\!\{ \eta_x(t) : x \in \sigma' \}\!\}$; we have $\rho_t \in \Di{X,\{a_0\}}$ since $d(x,\gamma(x)) \leq d(x,a_0)+d(\gamma(x),a_0) \leq d_\infty(\sigma',\sigma_\varnothing)+d_\infty(\tau',\sigma_\varnothing)$ for each $x \in \sigma'$ and therefore
\[
d_\infty(\rho_t,\sigma_\varnothing) \leq d_\infty(\sigma',\sigma_\varnothing)+t(d_\infty(\sigma',\sigma_\varnothing)+d_\infty(\tau',\sigma_\varnothing)) \leq 2d_\infty(\sigma',\sigma_\varnothing)+d_\infty(\tau',\sigma_\varnothing) < \infty.
\]

Now since $d(\sigma,\sigma') = 0$ we have $d(\sigma,\rho) = d(\sigma',\rho)$ (similarly, $d(\tau,\rho) = d(\tau',\rho)$) for any $\rho \in \Di{X,\{a_0\}}$. Moreover, whenever $0 \leq t < u \leq 1$ we have
\[
d_\infty(\rho_t,\rho_u) \leq \sup \{ d(\eta_x(t),\eta_x(u)) : x \in \sigma' \} = (u-t)d_\infty(\sigma,\tau),
\]
and therefore the opposite inequality must also hold since
\[
d_\infty(\rho_t,\rho_u) \geq d_\infty(\sigma,\tau)-d_\infty(\sigma,\rho_t)-d_\infty(\rho_u,\tau) \geq [1-(t-0)-(1-u)] d_\infty(\sigma,\tau) = (u-t)d_\infty(\sigma,\tau).
\]
It follows that if we define $\xi\colon [0,1] \to \Di{X,\{a_0\}}$ by $\xi(0) = \sigma$, $\xi(1) = \tau$, and $\xi(t) = \rho_t$ for $0 < t < 1$, then $\xi$ is a constant speed geodesic in $\Di{X,\{a_0\}}$ from $\sigma$ to $\tau$, as required.
\end{proof}

In the other direction, we have the following.

\begin{lem} \label{lem:Di-geodesic-=>}
If $X$ and $a_0 \in X$ are such that the pseudo-metric space $\Di{X,\{a_0\}}$ is geodesic, then any two points $x,y \in X$ with either $d(x,y) < d(x,a_0)$ or $y=a_0$ can be joined by a geodesic.
\end{lem}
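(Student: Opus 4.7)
The plan is to reduce the statement to the existence of midpoints in $X$ between the relevant pairs, and then to upgrade midpoints to geodesics by a standard iteration using the properness (hence completeness) of $X$.

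Given $x,y\in X$ with either $d(x,y)<d(x,a_0)$ or $y=a_0$, consider the singleton diagrams $\sigma=\mset{x}$ and $\tau=\mset{y}$ in $\Di{X,\{a_0\}}$ (so that $\tau=\sigma_\vn$ when $y=a_0$). A direct computation gives $d_\infty(\sigma,\tau)=d(x,y)$: the bijection $x\mapsto y$ realizes this cost, and the hypothesis forces any competing bijection using added copies of $a_0$ to have cost at least $d(x,a_0)\geq d(x,y)$. By the geodesicity assumption, there is a constant-speed geodesic $\xi\colon [0,1]\to\Di{X,\{a_0\}}$ from $\sigma$ to $\tau$, so in particular $d_\infty(\sigma,\xi(1/2))=d_\infty(\xi(1/2),\tau)=d(x,y)/2$.

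The technical heart of the argument is to extract a midpoint of $x$ and $y$ from $\xi(1/2)$. For each small $\varepsilon>0$, pick near-optimal bijections of cost $\leq d(x,y)/2+\varepsilon$ between appropriate representatives of $\sigma$ and $\xi(1/2)$, and of $\xi(1/2)$ and $\tau$, after disjoint-unioning multisets of $a_0$'s as in the definition of $d_\infty$. For $\varepsilon$ small enough the bound $d(x,a_0)>d(x,y)/2+\varepsilon$ forces the first bijection to send $x$ to a genuine point $p_\varepsilon\in \widetilde{\xi(1/2)}$ with $d(x,p_\varepsilon)\leq d(x,y)/2+\varepsilon$, and every other point of $\widetilde{\xi(1/2)}$ to lie within $d(x,y)/2+\varepsilon$ of $a_0$. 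A short case analysis on the preimage of $y$ under the second bijection---namely $p_\varepsilon$, another genuine point $q_\varepsilon\in\widetilde{\xi(1/2)}$, or an added $a_0$---rules out the last two options: in each of them one can combine $d(x,p_\varepsilon)\leq d(x,y)/2+\varepsilon$ with an estimate $d(p_\varepsilon,a_0)\leq d(x,y)/2+\varepsilon$ to contradict $d(x,a_0)>d(x,y)$ via the triangle inequality. (When $y=a_0$ the argument is simpler: every point of $\widetilde{\xi(1/2)}$ is matched to some $a_0$, yielding $d(p_\varepsilon,a_0)\leq d(x,y)/2+\varepsilon$ directly.) Hence $d(x,p_\varepsilon),\,d(p_\varepsilon,y)\leq d(x,y)/2+\varepsilon$. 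Since $X$ is proper, the family $\{p_\varepsilon\}$ lies in a compact ball, so a subsequence converges to a point $m\in X$ with $d(x,m),d(m,y)\leq d(x,y)/2$; the triangle inequality then forces both inequalities to be equalities, making $m$ a midpoint of $x$ and $y$.

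Next, I would verify that the hypothesis is preserved under midpoint splitting: the pair $(x,m)$ satisfies $d(x,m)=d(x,y)/2<d(x,a_0)$, while the pair $(m,y)$ satisfies either $y=a_0$ (if this held originally) or $d(m,y)=d(x,y)/2<d(m,a_0)$, the latter because $d(m,a_0)\geq d(x,a_0)-d(x,m)>d(x,y)/2$. Iterating the midpoint construction on a dyadic subdivision of $[0,1]$ therefore yields, for each $n\in\NN$ and $0\leq k\leq 2^n$, a point $\eta(k/2^n)\in X$ such that consecutive dyadic samples lie at distance $d(x,y)/2^n$. Since $X$ is proper and therefore complete, the family $\{\eta(k/2^n)\}$ is uniformly continuous in its index and extends uniquely to a continuous map $\eta\colon [0,1]\to X$ satisfying $d(\eta(s),\eta(t))\leq |s-t|\,d(x,y)$; comparing with the triangle inequality forces this to be an equality for $s=0,t=1$, so $\eta$ is a geodesic from $x$ to $y$.

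The main obstacle is the midpoint extraction in the second paragraph: a near-optimal bijection is free to absorb mass into added copies of $a_0$, and it requires both the strict inequality $d(x,y)<d(x,a_0)$ (or the $y=a_0$ alternative) and a careful case analysis of both bijections simultaneously to guarantee that a \emph{single} point of $\widetilde{\xi(1/2)}$ is close to both $x$ and $y$.
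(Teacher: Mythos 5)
Your argument has a genuine gap: it invokes properness (hence completeness) of $X$, but the lemma carries no such hypothesis --- it is stated for an arbitrary pointed metric space, and the paper needs it in exactly that generality. The lemma gets applied to the pair $(X/A,\{A\})$ via the isometry $\Di{X,A}\cong\Di{X/A,\{A\}}$, and $X/A$ need not be proper even when $X$ is (take $X=\R^2$ and $A$ a line: the closed unit ball about the basepoint of $X/A$ contains infinitely many points pairwise at distance $1$); the paper also remarks explicitly that this direction of Theorem~\ref{thm:Di-geodesic} holds with no properness assumption at all. Properness enters your proof at two places that cannot be removed for free. First, your extraction step only produces \emph{approximate} midpoints $p_\varepsilon$ with $d(x,p_\varepsilon),d(p_\varepsilon,y)\le d(x,y)/2+\varepsilon$; the diagram $\xi(\tfrac12)$ may contain infinitely many distinct points within $d(x,y)/2+\varepsilon$ of $x$, so the $p_\varepsilon$ need not stabilize, and without a compact ball to work in they need not converge. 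Second, even granted exact midpoints, your dyadic assembly needs completeness to define $\eta$ at non-dyadic times. A space admitting only approximate midpoints is merely a length space, so neither step survives dropping properness.

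The paper circumvents both issues with a single device: instead of aiming at $t=\tfrac12$, it examines $\xi_{x,y}(t)$ only for $t\le\tfrac14$. There a near-optimal matching moves points by less than $\tfrac13 d(x,a_0)$, which creates a gap: exactly one point $u(x,y,t)$ of the diagram satisfies $d(u,a_0)>\tfrac23 d(x,a_0)$, while every other point satisfies $d(\cdot,a_0)<\tfrac13 d(x,a_0)$. This makes the distinguished point unique and independent of $\varepsilon$, so it is an honest point of the diagram (no limit required), and the same gap forces near-optimal bijections between $\xi_{x,y}(s)$ and $\xi_{x,y}(t)$ to match the distinguished points, so $t\mapsto u(x,y,t)$ is already a $d(x,y)$-Lipschitz path on $[0,\tfrac14]$. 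Iterating a quarter at a time defines $\eta(t)$ explicitly for every $t\in[0,1)$, with $\eta(1)=y$ adjoined by hand; no compactness or completeness is used. Your case analysis ruling out matchings of $y$ to an added copy of $a_0$ (or to a second genuine point of $\xi(\tfrac12)$) is essentially sound, but at the half-step the two ranges $d(\cdot,a_0)\ge d(x,a_0)/2-\varepsilon$ and $d(\cdot,a_0)\le d(x,a_0)/2+\varepsilon$ overlap, so there is no canonical distinguished point and the rest of your construction does not go through without the extra hypothesis.
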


\begin{proof}
We say a pair $(x,y) \in X^2$ is \emph{good} if $x \neq y$ and either $d(x,y) < d(x,a_0)$ or $y=a_0$. We aim to show that any good pair of points in $X$ can be joined by a geodesic. For any good pair $(x,y)$, we pick a constant speed geodesic $\xi_{x,y}\colon [0,1] \to \Di{X,\{a_0\}}$ from $\{\!\{ x \}\!\}$ to $\{\!\{ y \}\!\}$. Note that we have $d_\infty(\{\!\{ x \}\!\},\{\!\{ y \}\!\}) = d(x,y)$; in particular, if $d(x,y) < d(x,a_0)$ then $d_\infty(\{\!\{ x \}\!\},\{\!\{ y \}\!\}) < d(x,a_0)$, and if $y=a_0$ then $d_\infty(\{\!\{ x \}\!\},\{\!\{ y \}\!\}) = d_\infty(\{\!\{x\}\!\},\sigma_\varnothing) = d(x,a_0)$, implying that we have $d_\infty(\{\!\{ x \}\!\},\{\!\{ y \}\!\}) \leq d(x,a_0)$ in either case.

The idea of the proof is to build a geodesic from $x$ to $y$ for a good pair $(x,y)$ one-quarter of the remaining distance at a time. Any point $v \in \xi_{x,y}(t)$ for $t \leq \frac{1}{4}$ satisfies either $d(v,a_0) < \frac{1}{3}d(x,a_0)$ or $d(v,a_0) > \frac{2}{3}d(x,a_0)$, and such a `gap' ensures that the partial geodesic can be built unambiguously by picking the points satisfying the latter inequality. Moreover, if such a partial geodesic from $x$ to $y$ ends at a point $u$ then one can show that the pair $(u,y)$ is again good, and so the construction can be iterated.

Let $(x,y) \in X^2$ be a good pair. Then for any $t \in [0,\frac{1}{4}]$ there exists a bijection $\gamma\colon \{\!\{ x \}\!\} \to \xi_{x,y}(t)$ such that $d(z,\gamma(z)) < \frac{1}{3} d(x,a_0)$ for every $z$ in (a chosen representative of) $\{\!\{ x \}\!\}$, implying that there exists a point $u = u(x,y,t) \in \xi_{x,y}(t)$, namely $u(x,y,t) := \gamma(x)$, such that $d(u,a_0) > \frac{2}{3}d(x,a_0)$ and $d(v,a_0) < \frac{1}{3}d(x,a_0)$ for every $v \in \xi_{x,y}(t) \setminus \{\!\{ u \}\!\}$; see Figure~\ref{fig:geodesic-=>}. Thus, as for any $0 \leq s < t \leq \frac{1}{4}$ we have $d_\infty(\xi_{x,y}(s),\xi_{x,y}(t)) = (t-s)d(x,y) < \frac{1}{3} d(x,a_0)$, any `nearly optimal' bijection $\gamma\colon \xi_{x,y}(s) \to \xi_{x,y}(t)$ must send $u(x,y,s)$ to $u(x,y,t)$ and therefore $d(u(x,y,s),u(x,y,t)) \leq (t-s) d(x,y)$.

Furthermore, we claim that $d(u,y) \leq \frac{3}{4} d(x,y)$ and that $(u,y)$ is a good pair, where $u = u(x,y,\frac{1}{4})$. If $y = a_0$, then $\{\!\{ y \}\!\} = \sigma_\varnothing$ as diagrams and the claim follows directly from the fact that $d_\infty(\xi_{x,y}(\frac{1}{4}),\sigma_\varnothing) = \frac{3}{4} d(x,y)$ (note that we cannot have $u = y$ as $d(x,u) \leq \frac{1}{4} d(x,y)$). Otherwise, we have $d(x,y) < d(x,a_0)$, and so for `nearly optimal' bijections $\gamma_x\colon \xi_{x,y}(\frac{1}{4}) \to \{\!\{ x \}\!\}$ and $\gamma_y\colon \xi_{x,y}(\frac{1}{4}) \to \{\!\{ y \}\!\}$ we have $d(\gamma_x(z),\gamma_y(z)) \leq d(z,\gamma_x(z)) + d(z,\gamma_y(z)) < d(x,a_0)$ for any $z \in \xi_{x,y}(\frac{1}{4})$. But then we must have $(\gamma_y \circ \gamma_x^{-1})(x) = y$; on the other hand, as $u$ is the only point of $\xi_{x,y}(\frac{1}{4})$ such that $d(u,a_0) > \frac{2}{3}d(x,a_0)$, for `nearly optimal' bijections we have $\gamma_x(u) = x$ and therefore $\gamma_y(u) = y$. Taking infimum over the `nearly optimal' bijections $\gamma_y$, this implies that $d(u,y) \leq \frac{3}{4} d(x,y)$. Furthermore, taking the infimum over $\gamma_x$ we have $d(x,u) \leq \frac{1}{4} d(x,y) < \frac{1}{4} d(x,a_0)$, and therefore $d(u,y) < \frac{3}{4} d(x,a_0) < d(x,a_0)-d(x,u) \leq d(u,a_0)$, implying that $(u,y)$ is a good pair, as claimed.

We now define the sequence $(u_n)_{n=0}^\infty$ inductively, by letting $u_0 = x$ and $u_n = u(u_{n-1},y,\frac{1}{4})$ for $n \geq 1$. We define a map $\eta\colon [0,1] \to X$ as follows:
\[
\eta(t) :=
\begin{cases}
u(u_n,y,1-(\frac{4}{3})^n(1-t)) &\text{if } 1-(\frac{3}{4})^n \leq t < 1-(\frac{3}{4})^{n+1},\\
y &\text{if } t=1.
\end{cases}
\]
It is straightforward to verify (using the triangle inequality) that $d(\eta(s),\eta(t)) \leq (t-s)d(x,y)$ for all $0 \leq s < t \leq 1$, and therefore that $\eta$ is a geodesic, as required.
\end{proof}

\begin{figure}[ht]
    \centering
\begin{tikzpicture}
\fill [green!5] (-3.464,-2) arc (210:150:4) -- (3.464,2) arc (30:-30:4) -- cycle;
\draw [dashed,green!70!black!50] (-3.464,-2) arc (210:150:4) (3.464,2) arc (30:-30:4);
\draw [dashed,green!70!black!50,fill=green!15] (0,0) circle (2);
\node [green!70!black!80] at (0,-1.5) {$B_{d/3}(a_0)$};
\node [green!70!black!80] at (2.6,-1.5) {$B_{2d/3}(a_0)$};
\draw [thick,blue,->] (-5.95,-0.75) to[in=-90] (-5.4,0.4);
\draw [thick,blue,->] (0,0) to[out=180,in=45] (-1.13,-0.43);
\draw [thick,blue,->] (0,0) to[out=90,in=-45] (-0.43,0.73);
\draw [thick,blue,->] (0,0) to[out=15,in=-120] (0.95,0.91);
\draw [fill=red] (0,0) circle (2pt) node [below] {$a_0$};
\draw [fill=red] (-5.95,-0.75) circle (2pt) node [below] {$x$};
\draw [fill=black] (-5.4,0.5) circle (2pt) node [above] {$u(x,y,t)$};
\draw [fill=black] (-1.2,-0.5) circle (2pt);
\draw [fill=black] (-0.5,0.8) circle (2pt);
\draw [fill=black] (1,1) circle (2pt);
\end{tikzpicture}
    \caption{An example of the situation in the proof of Lemma~\ref{lem:Di-geodesic-=>}; here $d = d(x,a_0)$, the black points represent a diagram $\xi_{x,y}(t)$ for some $t \in [0,\frac{1}{4}]$, and the arrows a `nearly optimal' bijection $\gamma\colon \{\!\{ x \}\!\} \to \xi_{x,y}(t)$.}
    \label{fig:geodesic-=>}
\end{figure}
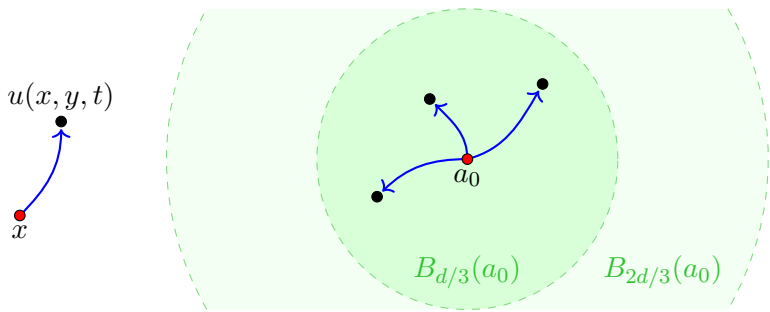

\begin{proof}[Proof of Theorem~\ref{thm:Di-geodesic}]
The `if' direction follows from Lemma~\ref{lem:Di-geodesic-<=}. The `only if' direction follows from Lemma~\ref{lem:Di-geodesic-=>} and the isometry $\Di{X,A} \cong \Di{X/A,\{A\}}$.
\end{proof}

\begin{proof}[Proof of Corollary~\ref{cor:Di-geodesic}]
In view of Theorem~\ref{thm:Di-geodesic}, it is enough to show that if $X$ is proper and geodesic, then $X/A$ is geodesic. Thus, let $x,y \in X$, and let $[x],[y] \in X/A$ be their images in $X/A$. If $d(x,y) < d(x,A)+d(y,A)$, then it is easy to verify that for any geodesic $\eta\colon [0,1] \to X$ from $x$ to $y$, the composite $q \circ \eta$ is a geodesic in $X/A$ from $[x]$ to $[y]$, where $q\colon X \to X/A$ is the quotient map. Otherwise, we have $d_{X/A}([x],[y]) = d(x,A)+d(y,A)$. But then as $X$ is proper and $A$ is closed, there exist $a,b \in A$ such that $d(x,A)=d(x,a)$ and $d(y,A)=d(y,b)$. Therefore, for any geodesics $\eta_x$ and $\eta_y$ in $X$ from $x$ to $a$ and from $b$ to $y$, respectively, the concatenation of $q \circ \eta_x$ and $q \circ \eta_y$ is a geodesic in $X/A$ from $[x]$ to $[y]$, as required.
\end{proof}

It follows from Lemma~\ref{lem:Di-geodesic-=>} that the `only if' direction of Theorem~\ref{thm:Di-geodesic} is true even without the assumption that $X$ is proper. However, the same cannot be said about the `if' direction, as the following example shows.

\begin{example} \label{ex:non-geodesic}
Let $X = c_0$ (that is, the space of all real sequences $(a_n)_{n=1}^\infty$ converging to zero, equipped with the supremum metric). Let $A = \{(0,0,\ldots)\} \subset X$. Then the space $X = X/A$ is geodesic: indeed, given any $(a_n),(b_n) \in X$ the path $\eta\colon [0,1] \to X$ defined by $\eta(t) = ((1-t)a_n+tb_n)_{n=1}^\infty$ is a geodesic. We claim that $\Di{X,A}$ is not geodesic.

For any finite subset $F \subset \Z_{\geq 1}$, let $\alpha_F = (a^F_n)_{n=1}^\infty \in X$ be such that $a_n^F = 1+\frac{1}{n}$ if $n \in F$ and $a_n^F = 0$ otherwise. Let $\mathcal{F}_+$ (respectively $\mathcal{F}_-$) be the collection of finite subsets of $\Z_{\geq 1}$ of even (respectively odd) cardinality, and consider the diagrams $\sigma_\pm = \{\!\{ \alpha_F : F \in \mathcal{F}_\pm \}\!\} \in \Di{X,A}$. We claim that there are no geodesics in $\Di{X,A}$ from $\sigma_+$ to $\sigma_-$.

Note first that $d_\infty(\sigma_+,\sigma_-) = 1$. Indeed, we have $d(\alpha_F,\alpha_{F'}) > 1$ and $d(\alpha_{F'},A) > 1$ for any $F \in \mathcal{F}_+$ and $F' \in \mathcal{F}_-$, implying that $d_\infty(\sigma_+,\sigma_-) \geq 1$. On the other hand, for any $N \geq 1$ and any $B \subseteq [N]$ the sets $\{ F \in \mathcal{F}_+ : F \cap [N] = B \}$ and $\{ F \in \mathcal{F}_- : F \cap [N] = B \}$ are both countably infinite (where we write $[N]$ for $\{1,\ldots,N\}$), implying that for any $N$ there exists a bijection $\delta_N \colon \mathcal{F}_+ \to \mathcal{F}_-$ such that $F \cap [N] = \delta_N(F) \cap [N]$ for any $F \in \mathcal{F}_+$. We can then define a bijection $\gamma_N \colon \sigma_+ \to \sigma_-$ by setting $\gamma_N(\alpha_F) = \alpha_{\delta_N(F)}$, and it's easy to check that $d(\alpha,\gamma_N(\alpha)) < 1+\frac{1}{N}$ for any $\alpha \in \sigma_+$. This implies that $d_\infty(\sigma_+,\sigma_-) = 1$, as claimed.

Now suppose for contradiction that $\xi\colon [0,1] \to \Di{X,A}$ is a constant speed geodesic from $\sigma_+$ to $\sigma_-$. Then $d_\infty(\sigma_-,\xi(\frac{1}{2})) = \frac{1}{2} < \frac{3}{2}$, implying that there exists a point $\beta = (b_n)_{n=1}^\infty \in \xi(\frac{1}{2})$ (which we fix from now on) such that $|2-b_1| \leq d(\alpha_{\{1\}},\beta) < \frac{3}{2}$ and therefore $b_1 > \frac{1}{2}$. Let $F_0 = \{ n \in \Z_{\geq 1} : b_n > \frac{1}{2} \}$; as $\beta \in X = c_0$, we know that $|F_0| < \infty$ and that there exists $\varepsilon \in (0,\frac{1}{2})$ such that $b_n \geq \frac{1}{2}+\varepsilon$ for all $n \in F_0$ and $b_n \leq \frac{1}{2}-\varepsilon$ for all $n > \frac{1}{\varepsilon}$. Since $d(\beta,A) \geq \frac{1}{2}+\varepsilon > \frac{1}{2}$ and since $d_\infty(\sigma_\pm,\xi(\frac{1}{2})) = \frac{1}{2}$, there exist subsets $F_\pm \in \mathcal{F}_\pm$ such that $d(\alpha_{F_\pm},\beta) < \frac{1}{2}+\varepsilon$. Now if $n \in F_0$ then $|0-b_n| \geq \frac{1}{2}+\varepsilon$ and therefore we must have $a_n^{F_\pm} = 1+\frac{1}{n}$; on the other hand, if $n \notin F_0$ then $b_n \leq \frac{1}{2}$ and either $\frac{1}{n} \geq \varepsilon$ or $b_n \leq \frac{1}{2}-\varepsilon$, so in either case we have $|1+\frac{1}{n}-b_n| \geq \frac{1}{2}+\varepsilon$ and therefore we must have $a_n^{F_\pm} = 0$. This implies that $F_- = F_0 = F_+$, which is impossible since $\mathcal{F}_- \cap \mathcal{F}_+ = \varnothing$; thus there are no geodesics in $\Di{X,A}$ from $\sigma_+$ to $\sigma_-$, as claimed.
\end{example}

Note that, for $\sigma_\pm \in \Di{X,A}$ as in Example~\ref{ex:non-geodesic}, there are no bijections $\gamma\colon \sigma_+ \to \sigma_-$ such that $\sup_{x \in \sigma_+} d(x,\gamma(x)) = d_\infty(\sigma_+,\sigma_-)$. Indeed, if there was such a bijection then the map $[0,1] \to \Di{X,A}$ sending $t$ to the diagram $\mset{(1-t)x+t\gamma(x) : x \in \sigma_+}$ would be a geodesic from $\sigma_+$ to $\sigma_-$, but we have shown that no such geodesics exist.

\bibliographystyle{amsplain}
\bibliography{BOTTLENECK_CURRENT}

\end{document}